\newcommand{\be}{\begin{equation}}
\newcommand{\ee}{\end{equation}}
\newcommand{\nnr}{\nonumber \\}
\newcommand{\eq}[1]{(\ref{#1})}
\newcommand{\fr}{\frac}
\newcommand{\tf}{\tfrac}
\DeclareMathOperator{\df}{\! d \!}
\newcommand{\pd}{\partial}
\newcommand{\sr}{\sqrt}
\newcommand{\bb}[1]{\mathbb{#1}}
\renewcommand{\vec}[1]{\mathbf{#1}}
\newcommand{\gvec}[1]{\boldsymbol{#1}}
\newcommand{\mat}[1]{\mathsf{#1}}
\DeclareMathOperator{\Var}{Var}
\DeclareMathOperator{\Cov}{Cov}
\newtheorem{theorem}{Theorem}[section]
\newtheorem{proposition}[theorem]{Proposition}
\newtheorem{lemma}[theorem]{Lemma}
\newtheorem{corollary}[theorem]{Corollary}
\begin{document}


\thispagestyle{empty}

\begin{center}
\textbf{\Large{Properties of the Concrete distribution}}\\
\vspace{50pt}
\large{David D. K. Chow}
\end{center}

\begin{center}
{\bf Abstract\\}
\end{center}
We examine properties of the Concrete (or Gumbel-softmax) distribution on the simplex.  Using the natural vector space structure of the simplex, the Concrete distribution can be regarded as a transformation of the uniform distribution through a reflection and a location-scale transformation.  The Fisher information is computed and the corresponding information metric is hyperbolic space.  We explicitly give an explicit transformation of the parameters of the distribution to Poincar\'{e} half-space coordinates, which correspond to an orthogonal parameterization, and the Fisher--Rao geodesic distance is computed.

\newpage


\section{Introduction}


The Concrete distribution, also known as the Gumbel-softmax distribution, is a continuous probability distribution with support on the $(K - 1)$-dimensional probability simplex $S_K$ \cite{mamnte, jagupo}.  Like the more well-known Dirichlet distribution, it includes the uniform distribution on the simplex, but is otherwise distinct from the Dirichlet distribution.

There is a temperature parameter, named by analogy with the Boltzmann (or Gibbs) distribution in thermodynamics.  In the zero temperature limit, samples become concentrated at the corners of the simplex and the distribution approximates a categorical distribution: the limit of a continuous distribution approximates a discrete distribution, hence the portmanteau ``Concrete''.  There are additionally $K$ parameters that are easily interpreted as unnormalized probabilities for the $K$ categories in this limit; a single constraint reduces these to $K - 1$ independent parameters.

The Concrete distribution has applications in machine learning for stochastic neural networks with discrete random variables, such as random sampling from discrete distributions and estimation of parameter gradients through backpropagation.  One way of constructing the Concrete distribution is from combining Gumbel distributions through the softmax function, hence its alternative name of the Gumbel-softmax distribution.  The differentiability of the softmax function, unlike the argmax function, allows for backpropagation.  Samples from the Concrete distribution at low temperatures approximate samples from a categorical distribution.  In this context, the use of the Concrete distribution is one example of applying the Gumbel-max trick; see \cite{hukopavasl} for a recent review.

However, we shall instead emphasize a different perspective: the Concrete distribution is a transformation of the uniform distribution on the simplex.  More specifically, the Concrete distribution is, with respect to the natural vector space structure of the simplex \cite{bigufa98, bigufa, aitchison01}, a reflection composed with a location-scale transformation of the uniform distribution.  Some properties of the Concrete distribution become easier to understand when regarded as a transformation of a uniform distribution on a simplex, rather than through Gumbel distributions.  In some sense, including its extension to negative temperature, the Concrete distribution is the simplest family of distributions on the simplex, since it is obtained from the uniform distribution on the simplex by carrying out transformations related to the natural vector space structure of the simplex.

In this article, we explore basic properties of the Concrete distribution.  A more general family of probability distributions that includes both the Concrete distribution and the Dirichlet distribution has been called the (inverse) Schl\"{o}milch distribution \cite{chow}, and is essentially a distribution previously known \cite{craiu2, kobajo, graf} but with a change of sign of the temperature parameter.  Making such a generalization from the Concrete distribution is useful for computing quantities relating to the Concrete distribution itself.  Log-ratio means and log-ratio covariances are commonly used in compositional data analysis (see e.g.\ \cite{aitchison, paegto}), and for the inverse Schl\"{o}milch distribution can be explicitly computed \cite{chow}.  We shall go further here for the Concrete distribution, using the same methods to compute the Fisher information.

One way to present Fisher information is as a Riemannian metric, the (Fisher--Rao) information metric, which plays a key role in information geometry (see e.g.\ \cite{amari, ayjolesc, nielsen}).  An advantage of the geometric formulation is that changes of the parameterization of a distribution, which geometrically correspond to changes of coordinates, become more transparent.  For example, it becomes clearer to find orthogonal parameters, which have attractive statistical properties \cite{coxrei}, since these correspond geometrically to orthogonal coordinates.  The information metric can be used to define a geodesic distance, Fisher--Rao distance, between two distributions within the parameter family (see e.g.\ \cite{atkmit} for discussion and examples).  The geometric approach to Fisher information, in particular covariance under coordinate transformations, is also the motivation for the Jeffreys prior in Bayesian statistics.

Several basic families of distributions have particularly simple information metrics that are constant curvature.  For example, there are hyperbolic space information metrics for the univariate normal distribution and more general location-scale distributions \cite{sabobe}.  The geometry of hyperbolic space has been well understood for a long time, with known orthogonal coordinate systems.

We similarly find that the information metric of the Concrete distribution is hyperbolic space, and give an explicit transformation of the parameters of the distribution to demonstrate this.  Whereas hyperbolic space is maximally symmetric, in contrast the Dirichlet distribution has an information metric \cite{lebrprpu} that does not generically seem to have any continuous symmetries.  From the perspective of the information metric, the Concrete distribution is simpler than the Dirichlet distribution, because of the symmetries it inherits from being a reflection-location-scale transformation of a single distribution.

In Section 2, we review the Concrete distribution, emphasizing its derivation as a reflection-scale-location transformation, with respect to the natural vector space structure of the simplex, of the uniform distribution.  In Section 3, we consider aspects of the more general inverse Schl\"{o}milch distribution, obtaining log-ratio moments that will be used when computing the Fisher information of the Concrete distribution.  In Section 4, we collect these computations together, and examine the Fisher information from the geometric perspective of the information metric.


\section{Concrete distribution}


We first review the Concrete distribution \cite{mamnte, jagupo}, including its expression in terms of the softmax function and Gumbel distributions.  We then emphasize an alternative perspective in terms of a transformation of the uniform distribution, presenting a geometric explanation of some of its properties.

The $(K - 1)$-dimensional probability simplex $S_K$ can be regarded as embedded within $\bb R^K$ and is given by $S_K = \{ (x_1 , \ldots , x_K)  \subset \bb R^K : \sum_{i = 1}^K x_i = 1 , x_i \geq 0 \}$.  For practical computation, one may assume that $x_1 , \ldots , x_{K - 1}$ are independent, which then determines the fill-up value $x_K = 1 - \sum_{i = 1}^{K - 1} x_i$.  Integrals over $S_K$ may then be computed using the Lebesgue measure through
\be
\int_{S_K} \df {^{K - 1}} x = \int_0^1 \! \df x_1 \, \int_0^{1 - x_1} \! \df x_2 \ldots \int_0^{1 - x_1 - \ldots - x_{K - 2}} \! \df x_{K - 1} .
\ee
We use bold notation $\gvec \alpha = (\alpha_1 , \ldots , \alpha_K)$ to denote $K$-dimensional vectors.  The $K$-dimensional vector whose entries are all 1 is $\vec 1 = (1 , \ldots, 1)$.  The usual Kronecker delta is denoted $\delta_{i j}$, but we shall also use Kronecker delta symbols with additional indices, $\delta_{i j k} = \delta_{i j} \delta_{i k}$ and $\delta_{i j k l} = \delta_{i j} \delta_{i k} \delta_{i l}$, which take the value 1 if all indices are the same, otherwise taking the value 0.  $\vec e_m$ is a unit-vector in the $m$-direction, i.e.\ with $i$-component $(\vec e_m)_j = \delta_{m i}$, which we consider to be a column vector.


\subsection{Basic properties}


The Concrete distribution $C(\gvec \beta, \tau)$ has probability density function
\be
f(\vec x) = \fr{(K - 1)! \tau^{K - 1}}{(\sum_{j = 1}^K \beta_j/x_j^\tau)^K} \prod_{i = 1}^K \fr{\beta_i}{x_i^{\tau + 1}} ,
\label{concrete}
\ee
with support on the simplex $S_K$, $K \geq 2$.

The parameters $\beta_i$ are positive and the distribution is invariant under the scaling $\gvec \beta \rightarrow \lambda \gvec \beta$, $\lambda > 0$.  To uniquely identify a distribution within the family, a natural choice of constraint to impose is $\sum_{i = 1}^K \beta_K = 1$, so $\beta_i \in (0, 1)$ can then be considered as probabilities.  However, we shall not assume any particular constraint on the probability vector $\gvec \beta$ unless explicitly stated, since analytic calculations are easier when performed with manifest $S_K$ permutation symmetry, imposing a constraint on the probability vector $\gvec \beta$ only at the end.  

The parameter $\tau$ is positive, and is called the temperature.  In both the $\tau \rightarrow \infty$ and $\tau \rightarrow 0$ limits, the Concrete distribution, which is continuous, tends to a discrete distribution.  In the $\tau \rightarrow \infty$ limit, the Concrete distribution becomes highly peaked around $K^{-1} \vec 1$, which is the centre of the simplex, tending to a Dirac distribution $f(\vec x) = \delta(\vec x - K^{-1} \vec 1)$, i.e.\ the Concrete distribution degenerates to a deterministic one-point distribution.  In the $\tau \rightarrow 0$ limit, the Concrete distribution tends towards a categorical distribution: samples tend towards one-hot vectors $\vec e_i$.  It has been shown that, in this limit, the categorical distribution has probabilities
\be
\bb P \Big( \lim_{\tau \rightarrow 0} X_i = 1 \Big) = \fr{\beta_i}{\sum_{j = 1}^K \beta_j} ,
\label{pcat}
\ee
which we shortly rederive.

It is convenient to decompose the Concrete probability density function \eq{concrete} as
\be
f(\vec x) = \fr{h(\vec x)}{J_0} ,
\ee
where the functions
\begin{align}
h(\vec x) & = \fr{1}{k(\vec x)^K \prod_{i = 1}^K x_i^{\tau + 1}} , & k(\vec x) & = \sum_{j = 1}^K \fr{\beta_k}{x_j^\tau}
\label{hk}
\end{align}
depend on the coordinates $\vec x$, and $J_0$ is a normalization constant that can be expressed as
\be
J_0 = \fr{1}{(K - 1)! \tau^{K - 1} \prod_{i = 1}^K \beta_i}
\label{J01}
\ee
or
\be
J_0 = \int_{S_K} \! \df {^{K - 1}} x \, h(\vec x) .
\label{J02}
\ee

A Concrete distribution $\vec X$ can be expressed as
\be
X_i = \textrm{softmax} [(\vec W + \log \gvec \beta)/\tau]_i , 
\ee
where $W_i \sim \textrm{Gumbel}(0, 1)$ is a standard Gumbel distribution, $\log \gvec \beta = (\log \beta_1 , \ldots , \log \beta_K)$ and the softmax function is
\be
\textrm{softmax}(\vec x)_i = \fr{\exp (x_i)}{\sum_{j = 1}^K \exp (x_j)} .
\ee
This formulation gives the Concrete distribution its alternative name of the Gumbel-softmax distribution.  It is simple to sample from a standard Gumbel distribution $W$, since $W = - \log Z$, where $Z \sim \textrm{Exponential}(1)$.  This unit exponential distribution can be simply sampled from a uniform distribution through an inverse transform, although there are also more sophisticated techniques.  In the $\tau \rightarrow 0$ limit, $\textrm{softmax}(\vec x/\tau)$ tends to $\textrm{argmax}_i (\vec x)$, represented as a one-hot vector; the differentiability of softmax, as opposed to argmax, enables backpropagation in stochastic networks.


\subsection{Transforming the uniform distribution}


As is widely known in the compositional data analysis literature, the simplex admits a vector space structure \cite{bigufa98, bigufa, aitchison01}, sometimes known as Aitchison geometry.  One defines a closure operator $\mathcal{C}$, which normalizes a vector $\vec x$ in the positive orthant $\bb R_+^K$ so that it lies on the simplex $S_K$, i.e.\ $\mathcal{C} \vec x = \vec x/\sum_{j = 1}^K x_j$.  The analogue in $S_K$ of vector addition in $\bb R^K$ is the perturbation operator \cite{aitchison82}
\be
\vec x \oplus \vec y = \mathcal{C} (x_1 y_1 , \ldots , x_K y_K) ,
\label{plus}
\ee
i.e.\ the Hadamard product of the vectors $\vec x$ and $\vec y$, followed by the closure operator.  The perturbation of $\vec x$ and $\vec y$ is often described as a perturbation of $\vec x$ by $\vec y$.  The analogue in $S_K$ of scalar multiplication in $\bb R^K$ is the powering operator \cite{aitchison}
\be
\alpha \odot \vec x = \mathcal{C} (x_1^\alpha, \ldots , x_K^\alpha) ,
\label{times}
\ee
where $\alpha \in \bb R$.  The operators $\oplus$ and $\odot$ together provide a vector space structure on $S_K$.

The definitions of the operators $\oplus$ and $\odot$ may appear unmotivated.  However, Leinster has explained that these operators are naturally induced by the usual vector space structure of $\bb R^K$ (see pp.\ 124--126 of \cite{leinster}).  First, one maps $\bb R^K$ to the positive orthant $\bb R_+^K$ through an exponential map applied to each coordinate.  Then, one considers the space of rays, using an equivalence relation $\sim$ that identifies $\vec x \sim \lambda \vec x$ for any $\lambda > 0$.  Under these mappings from $\bb R^K$ to $\bb R_+^K / \sim$, the operations of vector addition and scalar multiplication on $\bb R^K$ become the operators $\oplus$ and $\odot$ but with the closure operator $\mathcal{C}$ omitted.  Finally, one identifies rays with points on the simplex $S_K$, giving the operators $\oplus$ and $\odot$ on $S_K$ as defined by \eq{plus} and \eq{times}.

A Concrete distribution $\vec X$ is related to the uniform distribution $\vec Y$ on the simplex by \cite{chow}
\begin{align}
Y_i & = \fr{\beta_i}{X_i^\tau \sum_{j = 1}^K \beta_j/X_j^\tau} , & X_i & = \fr{\beta_i^{1/\tau}}{Y_i^{1/\tau} \sum_{j = 1}^K (\beta_j/Y_j)^{1/\tau}} .
\label{transform}
\end{align}
This transformation on the simplex $S_K$ is analogous to random variables $\vec X$ and $\vec Y$ on $\bb R^K$ transforming as
\begin{align}
Y_i & = \log \beta_i - \tau X_i , & X_i & = (\log \beta_i - Y_i)/\tau ,
\end{align}
which is a composition of a reflection and a location-scale transformation.

We may also considering the opposite sign of the temperature parameter $\tau$.  The simplex transformation \eq{transform} can be generalized to
\begin{align}
Y_i & = \fr{\beta_i X_i^{\pm \tau}}{\sum_{j = 1}^K \beta_j X_j^{\pm \tau}} , & X_i & = \fr{\beta_i^{1/\tau} Y_i^{\pm 1/\tau}}{\sum_{j = 1}^K \beta_j Y_j^{\pm 1/\tau}} ,
\end{align}
where $\tau > 0$, which is analogous to the $\bb R^K$ transformation
\begin{align}
Y_i & = \log \beta_i \pm \tau X_i , & X_i & = \pm (Y_i - \log \beta_i)/\tau .
\end{align}
Upper signs correspond to a scale-location transformation on $\bb R^K$, without any reflection.  The two signs may be unified by defining a real-valued inverse temperature $B = \pm 1/\tau$, as is standard in thermodynamics.  The distributions generated by the powering transformation are known in thermodynamics as escort distributions (see Chapter 9 of \cite{becsch}).


\subsection{Rounding}


The simplex transformation \eq{transform}, from a Concrete distribution $\vec X$ to a uniform distribution $\vec Y$, implies that the rounding probability
\be
p_i = \bb P [X_i = \max (X_1 , \ldots , X_K)]
\label{probmaxX}
\ee
can be expressed as
\be
p_i = \bb P [\beta_i/Y_i = \max (\beta_1/Y_1 , \ldots , \beta_K/Y_K)] = \bb P [Y_i/\beta_i = \min (Y_1/\beta_1 , \ldots , Y_K/\beta_K)] .
\label{probmaxY}
\ee
The uniform distribution $\vec Y$ is independent of the temperature $\tau$, so the probability $p_i$ is independent of the temperature $\tau$.  Taking the limit $\tau \rightarrow 0$, we obtain the categorical distribution with probabilities $p_i$, as stated in \eq{pcat}.

There are several methods to derive the known value
\be
p_i = \fr{\beta_i}{\sum_{j = 1}^K \beta_j} .
\label{pvalue}
\ee
By considering the expression \eq{probmaxY}, which is in terms of a uniform distribution, the probability can be regarded as a calculation of volumes in Euclidean geometry.  The simplex $S_K$, is the convex hull of the vertices $V = \{ \vec e_1 , \ldots , \vec e_K \}$.  We introduce an extra vertex at $\mathcal{C}(\gvec \beta) = \gvec \beta /\sum_{j = 1}^K \beta_j$ to partition the simplex $S_K$ into subsimplices $S_{K, i}$, where $i = 1 , \ldots , K$.  The subsimplex $S_{K, i}$ is the convex hull of the vertices $V_i = V \cup \{ \mathcal{C} (\gvec \beta) \} \setminus \{ \vec e_i \}$, i.e.\ the vertices $V$, but with $\vec e_i$ replaced by $\mathcal{C} (\gvec \beta)$.  The boundary between subsimplices $S_{K, i}$ and $S_{K, j}$ is a $(K - 2)$-dimensional simplex given by the convex hull of the vertices $V \cup \{ \mathcal{C}(\gvec \beta) \} \setminus \{ \vec e_i , \vec e_j \}$, and is part of the $(K - 2)$-dimensional plane $y_i/\beta_i = y_j/\beta_j$.  The probability $p_i$ is therefore the ratio of the $(K - 1)$-dimensional volumes of $S_{K, i}$ and $S_K$.  The $K \times K$ matrix
\be
\mat M_i = \begin{pmatrix}
\vec e_1 & \cdots & \vec e_{i - 1} & \mathcal{C} (\gvec \beta) & \vec e_{i + 1} & \cdots & \vec e_K
\end{pmatrix}
\ee
represents an affine transformation that maps the vertices $V$ to the vertices $V_i$.  The determinant $\det \mat M_i = \beta_i/\sum_{j = 1}^K \beta_j$ represents the ratio of the volumes of $S_{K, i}$ and $S_K$, so the value of the probability \eq{pvalue} follows.  Similar geometric reasoning has been used in \cite{vaugab, dempster72, jagoedde}.  There are also derivations that rely on considering a race between competing exponential clocks and using standard properties of exponential distributions \cite{geheyezh, bunasc13, bunasc18, maddison}.


\section{Inverse Schl\"{o}milch distribution}


We now consider a generalization of the Concrete distribution that enables us to compute quantities relating to the Concrete distribution itself.  The inverse Schl\"{o}milch distribution \cite{chow} adds a vector of positive parameters $\gvec \alpha$ to incorporate (if the temperature is negative) the Dirichlet distribution.

The inverse Schl\"{o}milch distribution $IS(\gvec \alpha, \gvec \beta, \tau)$ has probability density function
\be
f(\vec x) = \fr{\tau^{K - 1} \Gamma (\alpha_+)}{(\sum_{j = 1}^K \beta_j/x_j^\tau)^{\alpha_+}} \prod_{i = 1}^K \fr{\beta_i^{\alpha_i}}{\Gamma(\alpha_i) x_i^{\tau \alpha_i + 1}} ,
\label{IS}
\ee
where $\alpha_i > 0$ and $\alpha_+ = \sum_{i = 1}^K \alpha_i$.  The components of the probability vector $\gvec \beta$ are again positive, and there remains an invariance under rescaling $\gvec \beta \rightarrow \lambda \gvec \beta$, $\lambda > 0$.

The case $\tau < 0$ (equivalent to replacing $\tau \rightarrow - \tau$ and keeping $\tau$ positive) is of separate interest, and has been previously constructed from generalized gamma distributions and considered in \cite{craiu2, kobajo, graf}, and called the Schl\"{o}milch distribution in \cite{chow}.  However, we continue to consider only $\tau > 0$, because of its inclusion of the Concrete distribution \eq{concrete}, which is obtained by setting the Dirichlet vector $\gvec \alpha = \vec 1$.

Although we are primarily concerned with the Concrete distribution here, the more general inverse Schl\"{o}milch is useful for computing properties of the Concrete distribution.  In particular, because the Dirichlet distribution is an exponential family distribution, the inverse Schl\"{o}milch distribution is an exponential family distribution if the probability vector $\gvec \beta$ and temperature $\tau$ are regarded as fixed.

It is convenient to decompose the inverse Schl\"{o}milch probability density function \eq{IS} as
\be
f(\vec x) = \fr{g(\vec x)}{J (\gvec \alpha)} ,
\ee
where the function
\be
g(\vec x) = \fr{1}{(\sum_{j = 1}^K \beta_j/x_j^\tau)^{\alpha_+}  \prod_{i = 1}^K x_i^{\tau \alpha_i + 1}}
\label{gx}
\ee
depends on the coordinates $\vec x$, and $J(\gvec \alpha)$ is a normalization constant that can be expressed as
\be
J(\gvec \alpha) = \fr{1}{\tau^{K - 1} \Gamma (\alpha_+)} \prod_{i = 1}^K \fr{\Gamma(\alpha_i)}{\beta_i^{\alpha_i}} 
\label{Jalpha1}
\ee
or
\be
J(\gvec \alpha) = \int_{S_K} \! \df {^{K - 1}} x \, g(\vec x) = \int_{S_K} \! \fr{\df {^{K - 1}} x}{(\sum_{j = 1}^K \beta_j/x_j^\tau)^{\alpha_+} \prod_{i = 1}^K x_i^{\tau \alpha_i + 1}} .
\label{Jalpha2}
\ee
Evaluated at $\gvec \alpha = \vec 1$, the function $g(\vec x)$ of \eq{gx} reduces to $h(\vec x)$ of \eq{hk}, and $J(\vec 1) = J_0$.


\subsection{Exponential family}


If all parameters except the Dirichlet vector $\gvec \alpha$ are fixed, then the inverse Schl\"{o}milch distribution is a member of the exponential family of distributions, i.e.\ the probability density functions can be expressed in the form
\be
f(\vec x) = \fr{t(\vec x)}{J(\gvec \alpha)} \exp \bigg( \sum_{i = 1}^K \alpha_i T_i (\vec x) \bigg) .
\label{expfamily}
\ee
For the inverse Schl\"{o}milch distribution, with $f(\vec x)$ given by \eq{IS}, we may take the sufficient statistic as
\be
\vec T (\vec X) = - \tau (\log X_1 , \ldots , \log X_K) - \log [k(\vec X)] \vec 1 ,
\label{sufficient}
\ee
the normalization constant $J(\gvec \alpha)$ as given before in \eq{Jalpha1}, and
\be
t(\vec x) = \fr{1}{\prod_{i = 1}^K x_i} .
\ee

In the general form of an exponential family distribution \eq{expfamily}, the natural parameter is $\gvec \alpha$, and $\vec T(\vec X)$ is a sufficient statistic.  It is well-known that the means are
\be
\bb E (T_i) = \fr{\pd \log J(\gvec \alpha)}{\pd \alpha_i} ,
\ee
the covariances are
\be
\Cov (T_i, T_j) = \fr{\pd^2 \log J(\gvec \alpha)}{\pd \alpha_i \, \pd \alpha_j} ,
\ee
and the raw second moments are
\be
\bb E (T_i T_j) = \fr{\pd^2 \log J(\gvec \alpha)}{\pd \alpha_i \, \pd \alpha_j} + \fr{\pd \log J(\gvec \alpha)}{\pd \alpha_i} \fr{\pd \log J(\gvec \alpha)}{\pd \alpha_j} .
\ee
Using the sufficient statistic \eq{sufficient} for the inverse Schl\"{o}milch distribution, we obtain the log-ratio means
\be
\bb E \bigg[ \log \bigg( \fr{X_i}{X_k} \bigg) \bigg] = - \fr{1}{\tau} \bigg( \fr{\pd}{\pd \alpha_i} - \fr{\pd}{\pd \alpha_k} \bigg) \log J
\label{islrmean}
\ee
and the log-ratio covariances
\be
\Cov \bigg[ \log \bigg( \fr{X_i}{X_k} \bigg) , \log \bigg( \fr{X_j}{X_l} \bigg) \bigg] = \fr{1}{\tau^2} \bigg( \fr{\pd}{\pd \alpha_i} - \fr{\pd}{\pd \alpha_k} \bigg) \bigg( \fr{\pd}{\pd \alpha_j} - \fr{\pd}{\pd \alpha_l} \bigg) \log J ,
\label{islrcov}
\ee
through which we obtain the raw log-ratio second moments
\begin{align}
\bb E \bigg[ \log \bigg( \fr{X_i}{X_k} \bigg) \log \bigg( \fr{X_j}{X_l} \bigg) \bigg] & = \Cov \bigg[ \log \bigg( \fr{X_i}{X_k} \bigg) , \log \bigg( \fr{X_j}{X_l} \bigg) \bigg] + \bb E \bigg[ \log \bigg( \fr{X_i}{X_k} \bigg) \bigg] \bb E \bigg[ \log \bigg( \fr{X_j}{X_l} \bigg) \bigg] \nnr
& = \fr{1}{\tau^2} J^{-1} \bigg( \fr{\pd}{\pd \alpha_i} - \fr{\pd}{\pd \alpha_k} \bigg) \bigg( \fr{\pd}{\pd \alpha_j} - \fr{\pd}{\pd \alpha_l} \bigg) J ,
\label{lrsecond}
\end{align}
where the final equality is verified by expansion of \eq{islrmean} and \eq{islrcov}.


\subsection{Log-ratio moments}


For probability distributions on the simplex, it is common, particularly in compositional data analysis, to consider expectations involving log-ratios $\log(X_i/X_k)$ \cite{aitchison}.  For the inverse Schl\"{o}milch distribution, we have seen that log-ratio moments naturally arise from considering the distribution as a member of the exponential family of distributions.  We record these log-ratio moments here.


\subsubsection{Log-ratio means}


\begin{proposition}
\label{ISmeansprop}
The log-ratio means of the inverse Schl\"{o}milch distribution $IS(\gvec \alpha, \gvec \beta, \tau)$ are
\be
\bb E \bigg[ \log \bigg( \fr{X_i}{X_k} \bigg) \bigg] = \fr{1}{\tau} \bigg[ - \psi(\alpha_i) + \psi(\alpha_k) + \log \bigg( \fr{\beta_i}{\beta_k} \bigg) \bigg] ,
\label{Elogik}
\ee
where $\psi(x) = \Gamma'(x)/\Gamma(x)$ is the digamma function.
\end{proposition}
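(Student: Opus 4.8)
The plan is to combine the exponential-family identity \eq{islrmean} with the closed-form expression \eq{Jalpha1} for the normalization constant, so that the whole computation reduces to differentiating a log-gamma function. Taking logarithms in \eq{Jalpha1},
\be
\log J(\gvec \alpha) = - (K - 1) \log \tau - \log \Gamma(\alpha_+) + \sum_{m = 1}^K \big[ \log \Gamma(\alpha_m) - \alpha_m \log \beta_m \big] ,
\ee
where $\alpha_+ = \sum_{m = 1}^K \alpha_m$ depends on every component of $\gvec \alpha$.

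Next I would differentiate with respect to $\alpha_i$; since $\pd \alpha_+ / \pd \alpha_i = 1$, the chain rule together with $\Gamma'/\Gamma = \psi$ gives
\be
\fr{\pd \log J}{\pd \alpha_i} = - \psi(\alpha_+) + \psi(\alpha_i) - \log \beta_i .
\ee
The term $\psi(\alpha_+)$, which is the only contribution coupling all the parameters, does not depend on the index $i$, so it drops out of the antisymmetric combination appearing in \eq{islrmean}:
\be
\bigg( \fr{\pd}{\pd \alpha_i} - \fr{\pd}{\pd \alpha_k} \bigg) \log J = \psi(\alpha_i) - \psi(\alpha_k) - \log \bigg( \fr{\beta_i}{\beta_k} \bigg) .
\ee
Substituting into \eq{islrmean} and multiplying by $- 1/\tau$ then yields \eq{Elogik}.

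The only step requiring any care is this cancellation of $\psi(\alpha_+)$, which is precisely what makes the log-ratio $\log(X_i/X_k)$, rather than $\bb E[\log X_i]$ individually, the clean quantity; everything else is a routine application of the exponential-family moment formulae already recorded. As an alternative that avoids \eq{Jalpha1} entirely, one could instead differentiate the integral representation \eq{Jalpha2} under the integral sign to recover $\bb E[T_i]$ directly from the sufficient statistic \eq{sufficient}, but this merely reconstructs the same log-gamma derivative by a longer route.
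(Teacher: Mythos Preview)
Your proof is correct and follows essentially the same approach as the paper: differentiate $\log J(\gvec\alpha)$ from \eq{Jalpha1} and substitute the antisymmetric combination into \eq{islrmean}. The only difference is that you display the intermediate single derivative $\pd \log J/\pd \alpha_i = -\psi(\alpha_+) + \psi(\alpha_i) - \log\beta_i$ and explicitly remark on the cancellation of $\psi(\alpha_+)$, whereas the paper writes down the difference \eq{JdikJ1} directly.
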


\begin{proof}
From $J(\gvec \alpha)$, given by \eq{Jalpha1}, we have
\be
\bigg( \fr{\pd}{\pd \alpha_i} - \fr{\pd}{\pd \alpha_k} 
\bigg) \log J = \psi(\alpha_i) - \psi(\alpha_k) - \log \beta_i + \log \beta_k ,
\label{JdikJ1}
\ee
which we substitute into the general expression for log-ratio means \eq{islrmean}.
\end{proof}

\begin{corollary}
The log-ratio means of the Concrete distribution $C(\gvec \beta, \tau)$ are
\be
\bb E \bigg[ \log \bigg( \fr{X_i}{X_k} \bigg) \bigg] = \fr{1}{\tau} \log \bigg( \fr{\beta_i}{\beta_k} \bigg) .
\ee
\end{corollary}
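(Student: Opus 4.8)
The plan is to obtain this as an immediate specialization of Proposition \ref{ISmeansprop}. Recall that the Concrete distribution $C(\gvec \beta, \tau)$ is precisely the inverse Schl\"{o}milch distribution $IS(\gvec \alpha, \gvec \beta, \tau)$ with the Dirichlet vector fixed to $\gvec \alpha = \vec 1$, as noted just after \eq{IS} (and reflected in the reductions $g(\vec x) \to h(\vec x)$, $J(\vec 1) = J_0$). So the approach is simply to substitute $\alpha_i = \alpha_k = 1$ into the log-ratio mean formula \eq{Elogik} of Proposition \ref{ISmeansprop}.

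The only point worth noting is that the two digamma contributions then coincide: $\psi(\alpha_i) = \psi(1) = \psi(\alpha_k)$, so $-\psi(\alpha_i) + \psi(\alpha_k) = 0$, and these terms cancel, leaving only $\tau^{-1} \log(\beta_i/\beta_k)$. There is no real obstacle here — the computation is a one-line substitution — and indeed the content of the corollary is precisely the observation that the digamma terms, which encode the Dirichlet structure of the inverse Schl\"{o}milch family, disappear at $\gvec \alpha = \vec 1$. One could alternatively rederive the result directly from the Concrete normalization constant \eq{J01}, but presenting it as a corollary makes the source of this simplification transparent.
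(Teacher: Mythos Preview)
Your proposal is correct and matches the paper's proof: both set $\gvec\alpha = \vec 1$ in \eq{Elogik} of Proposition~\ref{ISmeansprop} and observe that the digamma terms drop out. The paper phrases this by quoting the value $\psi(1) = -\gamma$, but as you note, all that is needed is $\psi(\alpha_i) = \psi(\alpha_k)$, so the cancellation is immediate.
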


\begin{proof}
Set $\gvec \alpha = \vec 1$ in \eq{Elogik} and use the value $\psi(1) = - \gamma$, where $\gamma$ is the Euler--Mascheroni constant.
\end{proof}


\subsubsection{Log-ratio covariances}


\begin{proposition}
The log-ratio covariances of the inverse Schl\"{o}milch distribution $IS(\gvec \alpha, \gvec \beta, \tau)$ are
\be
\Cov \bigg[ \log \bigg( \fr{X_i}{X_k} \bigg) , \log \bigg( \fr{X_j}{X_l} \bigg) \bigg] = \fr{1}{\tau^2} [(\delta_{i j} - \delta_{i l}) \psi'(\alpha_i) - (\delta_{k j} - \delta_{k l}) \psi'(\alpha_k)] ,
\label{Covikjl}
\ee
where $\psi'(x)$ is the trigamma function.
\end{proposition}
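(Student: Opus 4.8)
The plan is to specialize the general exponential-family identity for log-ratio covariances, equation~\eq{islrcov}, to the explicit normalization constant $J(\gvec\alpha)$ recorded in~\eq{Jalpha1}. First I would take the logarithm of~\eq{Jalpha1},
\be
\log J(\gvec\alpha) = -(K - 1)\log\tau - \log\Gamma(\alpha_+) + \sum_{m = 1}^K \big[ \log\Gamma(\alpha_m) - \alpha_m \log\beta_m \big] ,
\ee
noting that the only $\gvec\alpha$-dependence sits in the $\Gamma$ factors and in the linear term $-\sum_m \alpha_m \log\beta_m$, and that $\alpha_+ = \sum_m \alpha_m$ depends on every component $\alpha_m$.

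Next I would apply the first difference operator. Since $\pd \log J/\pd\alpha_j = -\psi(\alpha_+) + \psi(\alpha_j) - \log\beta_j$, the combination $(\pd/\pd\alpha_j - \pd/\pd\alpha_l)\log J$ kills the symmetric term $-\psi(\alpha_+)$ and the constant $-(K-1)\log\tau$, leaving $\psi(\alpha_j) - \psi(\alpha_l) - \log(\beta_j/\beta_l)$; this is exactly~\eq{JdikJ1} with indices relabelled, so I can cite it directly. The key structural point is that after one difference operator the $\log\beta$ part no longer depends on $\gvec\alpha$, hence it is annihilated by any further $\alpha$-differentiation.

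Then I would apply the second difference operator $(\pd/\pd\alpha_i - \pd/\pd\alpha_k)$ to this result. Using $\pd\psi(\alpha_j)/\pd\alpha_i = \delta_{ij}\psi'(\alpha_i)$ and likewise for $\psi(\alpha_l)$, one obtains $(\delta_{ij} - \delta_{il})\psi'(\alpha_i) - (\delta_{kj} - \delta_{kl})\psi'(\alpha_k)$. Substituting this into~\eq{islrcov} and carrying along the prefactor $1/\tau^2$ yields~\eq{Covikjl}.

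The computation is entirely mechanical, so there is no real obstacle; the only point requiring a little care is bookkeeping of which Kronecker deltas survive the two successive difference operators, together with the observation that the $\psi'(\alpha_+)$ term produced by differentiating $-\psi(\alpha_+)$ twice drops out. That cancellation holds because $(\pd/\pd\alpha_i - \pd/\pd\alpha_k)$ annihilates anything that depends on $\gvec\alpha$ only through the totally symmetric combination $\alpha_+$, and this is essentially the only non-bookkeeping content of the argument.
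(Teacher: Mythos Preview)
Your proposal is correct and follows essentially the same route as the paper: differentiate~\eq{JdikJ1} with the second difference operator and substitute the result into~\eq{islrcov}. You simply spell out in more detail why the $\psi'(\alpha_+)$ and $\log\beta$ contributions vanish under the difference operators, which the paper leaves implicit.
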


\begin{proof}
By differentiating \eq{JdikJ1}, we have
\begin{align}
\bigg( \fr{\pd}{\pd \alpha_i} - \fr{\pd}{\pd \alpha_k} \bigg) \bigg( \fr{\pd}{\pd \alpha_j} - \fr{\pd}{\pd \alpha_l} \bigg) \log J & = (\delta_{i j} - \delta_{i l}) \psi'(\alpha_i) - (\delta_{k j} - \delta_{k l}) \psi'(\alpha_k) ,
\end{align}
which we substitute into the general expression for log-ratio covariances \eq{islrcov}.
\end{proof}

\begin{corollary}
The log-ratio covariances of the Concrete distribution $C(\gvec \beta, \tau)$ are
\be
\Cov \bigg[ \log \bigg( \fr{X_i}{X_k} \bigg) , \log \bigg( \fr{X_j}{X_l} \bigg) \bigg] = \fr{\pi^2}{6 \tau^2} (\delta_{i j} - \delta_{i l} - \delta_{j k} + \delta_{k l}) .
\label{CovConcrete}
\ee
\end{corollary}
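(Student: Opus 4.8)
The plan is to obtain this directly from the inverse Schl\"{o}milch log-ratio covariance formula \eq{Covikjl} by specializing the Dirichlet vector to $\gvec \alpha = \vec 1$, exactly in parallel with the preceding corollary for the log-ratio means. First I would recall the value of the trigamma function at unit argument,
\[
\psi'(1) = \sum_{n = 1}^\infty \fr{1}{n^2} = \zeta(2) = \fr{\pi^2}{6} ,
\]
the standard Basel constant. Since the only appearances of $\gvec \alpha$ on the right-hand side of \eq{Covikjl} are through $\psi'(\alpha_i)$ and $\psi'(\alpha_k)$, setting $\alpha_i = \alpha_k = 1$ replaces both factors by $\pi^2/6$, giving
\[
\Cov \bigg[ \log \bigg( \fr{X_i}{X_k} \bigg) , \log \bigg( \fr{X_j}{X_l} \bigg) \bigg] = \fr{\pi^2}{6 \tau^2} \big[ (\delta_{i j} - \delta_{i l}) - (\delta_{k j} - \delta_{k l}) \big] .
\]

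Finally I would use the symmetry $\delta_{k j} = \delta_{j k}$ of the Kronecker delta to rewrite the third term, which rearranges the bracket into $\delta_{i j} - \delta_{i l} - \delta_{j k} + \delta_{k l}$ and yields the stated expression \eq{CovConcrete}. There is essentially no obstacle: the whole argument is a one-line substitution once $\psi'(1) = \pi^2/6$ is invoked, just as the mean corollary rested on $\psi(1) = -\gamma$. The only point worth a moment's care is the bookkeeping of the four Kronecker deltas and their signs, but this is immediate from the displayed identity.
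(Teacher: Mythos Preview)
Your proposal is correct and follows exactly the same approach as the paper's own proof: set $\gvec\alpha = \vec 1$ in \eq{Covikjl} and use $\psi'(1) = \pi^2/6$. The paper's proof is the one-line version of what you wrote; your additional remarks about the Basel identity and the Kronecker-delta bookkeeping are accurate but not needed.
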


\begin{proof}
Set $\gvec \alpha = \vec 1$ in \eq{Covikjl} and use the value $\psi'(1) = \pi^2/6$.
\end{proof}

In general, log-ratio covariances can be expressed in terms of log-ratio variances $\tau_{i j} = \Var [\log (X_i/X_j)]$ through \cite{aitchison84}
\be
\Cov \bigg[ \log \bigg( \fr{X_i}{X_k} \bigg) , \log \bigg( \fr{X_j}{X_l} \bigg) \bigg] = \fr{1}{2} (\tau_{i l} + \tau_{j k} - \tau_{i j} - \tau_{k l}) .
\ee

By setting $i = j$ and $k = l$ in \eq{Covikjl}, we obtain the following result.
\begin{corollary}
The log-ratio variances of the inverse Schl\"{o}milch distribution $IS(\gvec \alpha, \gvec \beta, \tau)$ are
\be
\Var \bigg[ \log \bigg( \fr{X_i}{X_k} \bigg) \bigg] = \fr{1}{\tau^2} (1 - \delta_{i k}) [ \psi'(\alpha_i) + \psi'(\alpha_k)] .
\ee
\end{corollary}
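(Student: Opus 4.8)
The plan is to obtain this corollary as a direct specialization of the log-ratio covariance formula \eq{Covikjl}, using the definitional identity $\Var[\log(X_i/X_k)] = \Cov[\log(X_i/X_k), \log(X_i/X_k)]$. Concretely, I would set $j = i$ and $l = k$ throughout \eq{Covikjl} and then simply evaluate the resulting Kronecker deltas.

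The only work is bookkeeping of the deltas. With $j = i$ one has $\delta_{ij} = \delta_{ii} = 1$ and $\delta_{kj} = \delta_{ki} = \delta_{ik}$; with $l = k$ one has $\delta_{il} = \delta_{ik}$ and $\delta_{kl} = \delta_{kk} = 1$. Substituting these into the bracket $(\delta_{ij} - \delta_{il}) \psi'(\alpha_i) - (\delta_{kj} - \delta_{kl}) \psi'(\alpha_k)$ gives $(1 - \delta_{ik}) \psi'(\alpha_i) - (\delta_{ik} - 1) \psi'(\alpha_k)$, and since $-(\delta_{ik} - 1) = 1 - \delta_{ik}$ the two terms share the common factor $1 - \delta_{ik}$, yielding $\fr{1}{\tau^2}(1 - \delta_{ik})[\psi'(\alpha_i) + \psi'(\alpha_k)]$ after reinstating the overall factor. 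There is essentially no obstacle here; the single point worth care is the sign in the second term, where the overall minus sign in \eq{Covikjl} combines with $\delta_{kl} = 1$ to produce an addition of $\psi'(\alpha_k)$ rather than a cancellation. As a sanity check, when $i = k$ the log-ratio is identically $\log 1 = 0$, and the formula correctly returns $0$ owing to the factor $1 - \delta_{ik}$; setting further $\gvec \alpha = \vec 1$ and using $\psi'(1) = \pi^2/6$ reproduces the diagonal of \eq{CovConcrete}.
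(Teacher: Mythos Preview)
Your proposal is correct and follows exactly the same approach as the paper, which simply states that the result is obtained by setting $i = j$ and $k = l$ in \eq{Covikjl}. Your explicit bookkeeping of the Kronecker deltas is a faithful expansion of that one-line remark.
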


From \eq{CovConcrete}, we similarly obtain the following result.
\begin{corollary}
The log-ratio variances of the Concrete distribution $C(\gvec \beta, \tau)$ are
\be
\Var \bigg[ \log \bigg( \fr{X_i}{X_k} \bigg) \bigg] = \fr{\pi^2}{3 \tau^2} (1 - \delta_{i k}) .
\ee
\end{corollary}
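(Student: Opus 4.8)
The plan is to obtain this as an immediate specialization of the Concrete log-ratio covariance formula \eq{CovConcrete}, using the elementary identity $\Var[Z] = \Cov[Z, Z]$. Concretely, I would set $j = i$ and $l = k$ in \eq{CovConcrete}, so that
\be
\Var \bigg[ \log \bigg( \fr{X_i}{X_k} \bigg) \bigg] = \fr{\pi^2}{6 \tau^2} (\delta_{i i} - \delta_{i k} - \delta_{i k} + \delta_{k k}) .
\ee
The only thing to check is the Kronecker-delta arithmetic: $\delta_{i i} = \delta_{k k} = 1$, so the bracket collapses to $2 - 2 \delta_{i k} = 2(1 - \delta_{i k})$, and the prefactor $\pi^2/(6\tau^2)$ combines with the factor $2$ to give $\pi^2/(3\tau^2)$.

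As a consistency check I would also derive it the second way indicated in the excerpt: take the inverse Schl\"{o}milch log-ratio variance corollary, which gives $\tau^{-2}(1 - \delta_{i k})[\psi'(\alpha_i) + \psi'(\alpha_k)]$, set $\gvec \alpha = \vec 1$, and use $\psi'(1) = \pi^2/6$; the bracket becomes $2 \cdot \pi^2/6 = \pi^2/3$, reproducing the same answer. Either route is a one-line substitution.

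There is essentially no obstacle here: the content is entirely in the previously established covariance formulas, and the only place one could slip is mishandling the diagonal Kronecker deltas (forgetting that $\delta_{ii}=1$ regardless of whether $i=k$) or dropping the factor of $2$. So the ``hard part'' is merely bookkeeping, and the proof will consist of citing \eq{CovConcrete}, performing the specialization $j=i$, $l=k$, and simplifying.
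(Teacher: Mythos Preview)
Your proposal is correct and matches the paper's own argument: the paper simply states that the result follows from \eq{CovConcrete} ``similarly'' to how the inverse Schl\"{o}milch variance corollary was obtained, i.e.\ by setting $i = j$, $k = l$ and simplifying the Kronecker deltas exactly as you do. Your second route via the inverse Schl\"{o}milch variance formula with $\gvec\alpha = \vec 1$ is a valid alternative check but is not needed.
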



\subsection{$\gvec \alpha = \vec 1 + \vec e_m + \vec e_n$ case}


For later use in computing the Fisher information of the Concrete distribution, we find log-ratio moments for the special case that the Dirichlet vector is $\gvec \alpha = \vec 1 + \vec e_m + \vec e_n$.

We first note that
\begin{align}
J(\vec 1 + \vec e_m + \vec e_n) & = \fr{J_0 (1 + \delta_{m n})}{K (K + 1) \beta_m \beta_n} ,
\label{Jspecial}
\end{align}
where $J_0$ is given by \eq{J01}.  From th values of the digamma function $\psi(2) = 1 - \gamma$ and $\psi(3) = 3/2 - \gamma$, we have
\begin{align}
\psi((\vec 1 + \vec e_m + \vec e_n)_i) & = \delta_{i m} + \delta_{i n} - \tf{1}{2} \delta_{i m n} - \gamma .
\label{digamma}
\end{align}
From the values of the trigamma function $\psi'(2) = \pi^2/6 - 1$ and $\psi'(3) = \pi^2/6 - 5/4$, we have
\begin{align}
\psi'((\vec 1 + \vec e_m + \vec e_n)_i) & = \pi^2/6 - \delta_{i m} - \delta_{i n} + \tf{3}{4} \delta_{i m n} .
\label{trigamma}
\end{align}

The log-ratio means are a special case of those found previously.
\begin{corollary}
The log-ratio means of the inverse Schl\"{o}milch distribution $IS(\vec 1 + \vec e_m + \vec e_n, \gvec \beta, \tau)$ are
\begin{align}
\bb E \bigg[ \log \bigg( \fr{X_i}{X_k} \bigg) \bigg] & = \fr{K (K + 1) \beta_m \beta_n}{J_0(1 + \delta_{m n})} \int_{S_K} \df{^{K - 1}} x \, \fr{h(\vec x) (\log x_i - \log x_k)}{k(\vec x)^2 x_m^\tau x_n^\tau} \nnr
& = \fr{1}{\tau} \bigg[ - \delta_{i m} - \delta_{i n} + \delta_{k m} + \delta_{k n} - \tf{1}{2} \delta_{i m n} + \tf{1}{2} \delta_{k m n} + \log \bigg( \fr{\beta_i}{\beta_k} \bigg) \bigg] .
\label{dikJ}
\end{align}
\end{corollary}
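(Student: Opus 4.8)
The corollary chains together two facts that are already available, and I would prove it in two independent steps. For the first (integral) equality, I would begin from the definition of the expectation as the integral over $S_K$ of the $IS(\gvec{\alpha}, \gvec{\beta}, \tau)$ density $f(\vec{x}) = g(\vec{x})/J(\gvec{\alpha})$ of \eq{IS} against $\log x_i - \log x_k$, now with $\gvec{\alpha} = \vec{1} + \vec{e}_m + \vec{e}_n$. For this choice one has $\alpha_+ = K + 2$, so $(\sum_{j=1}^K \beta_j/x_j^\tau)^{\alpha_+} = k(\vec{x})^K\, k(\vec{x})^2$, and $\tau\alpha_i + 1 = (\tau + 1) + \tau\delta_{i m} + \tau\delta_{i n}$, so $\prod_{i=1}^K x_i^{\tau\alpha_i + 1} = \big(\prod_{i=1}^K x_i^{\tau + 1}\big)\, x_m^\tau x_n^\tau$, the factor $x_m^\tau x_n^\tau$ correctly collapsing to $x_m^{2\tau}$ when $m = n$. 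Hence the function $g$ of \eq{gx} factorizes as $g(\vec{x}) = h(\vec{x})/[k(\vec{x})^2 x_m^\tau x_n^\tau]$ in terms of the $h$, $k$ of \eq{hk}. Substituting this factorization, together with $1/J(\vec{1} + \vec{e}_m + \vec{e}_n) = K(K+1)\beta_m\beta_n/[J_0(1 + \delta_{m n})]$ read off from \eq{Jspecial}, then gives the first displayed equality.

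For the second (evaluation) equality, I would invoke Proposition~\ref{ISmeansprop}, which already gives $\bb{E}[\log(X_i/X_k)] = \tau^{-1}[-\psi(\alpha_i) + \psi(\alpha_k) + \log(\beta_i/\beta_k)]$ for an arbitrary Dirichlet vector $\gvec{\alpha}$ (equation \eq{Elogik}), specialize to $\alpha_j = (\vec{1} + \vec{e}_m + \vec{e}_n)_j$, and insert the digamma values \eq{digamma}. The Euler--Mascheroni constants cancel in $-\psi(\alpha_i) + \psi(\alpha_k)$, and collecting the surviving Kronecker symbols produces the stated combination of $\delta_{i m}$, $\delta_{i n}$, $\delta_{k m}$, $\delta_{k n}$, $\delta_{i m n}$, $\delta_{k m n}$ and $\log(\beta_i/\beta_k)$.

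There is no genuine obstacle here: as the surrounding text notes, the result is a special case of Proposition~\ref{ISmeansprop}, and the extra integral representation is pure rewriting of the density. The one point deserving attention is the consistency of the Kronecker-delta bookkeeping when $m = n$: there $\gvec{\alpha} = \vec{1} + 2\vec{e}_m$ has a component equal to $3$ rather than $2$, and one should check that $\delta_{i m n} = \delta_{i m}\delta_{i n}$, $\delta_{k m n}$ and the factor $1 + \delta_{m n}$ do reproduce the $\psi(3)$ value and the corresponding normalization $J(\vec{1} + 2\vec{e}_m)$, so that a single formula covers both $m \neq n$ and $m = n$. I expect that degenerate case to be the only subtlety; everything else is immediate from \eq{IS}, \eq{gx}, \eq{hk}, \eq{Jspecial}, \eq{Elogik} and \eq{digamma}.
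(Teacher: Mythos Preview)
Your proposal is correct and follows essentially the same route as the paper: the first equality is obtained by unwinding the definition of the expectation for $IS(\vec 1 + \vec e_m + \vec e_n, \gvec\beta, \tau)$ and invoking \eq{Jspecial}, and the second equality by specializing \eq{Elogik} of Proposition~\ref{ISmeansprop} via the digamma values \eq{digamma}. The only difference is that you spell out the factorization $g(\vec x) = h(\vec x)/[k(\vec x)^2 x_m^\tau x_n^\tau]$ and the $m=n$ bookkeeping more explicitly than the paper, which simply says ``from definitions''.
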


\begin{proof}
The first equality comes from $J(\vec 1 + \vec e_m + \vec e_n)$ in \eq{Jspecial} and from definitions.  For the second equality, set $\gvec \alpha = \vec 1 + \vec e_m + \vec e_n$ in \eq{Elogik} of Proposition \ref{ISmeansprop}, and use the value of the digamma function in \eq{digamma}, i.e.\ $\psi(\alpha_i) = \delta_{i m} + \delta_{i n} - \tf{1}{2} \delta_{i m n} - \gamma $.
\end{proof}

For later use, we will need raw log-ratio second moments.
\begin{lemma}
The raw log-ratio second moments $\bb E [ \log ( X_i/X_k ) \log ( X_i/X_l) ]$ of the inverse Schl\"{o}milch distribution $IS(\vec 1 + \vec e_m + \vec e_n, \gvec \beta, \tau)$ are
\begin{align}
& \bb E \bigg[ \log \bigg( \fr{X_i}{X_k} \bigg) \log \bigg( \fr{X_i}{X_l} \bigg) \bigg] = \fr{K (K + 1) \beta_m \beta_n}{J_0(1 + \delta_{m n})} \int_{S_K} \df{^{K - 1}} x \, \fr{h(\vec x) (\log x_i - \log x_k) (\log x_i - \log x_l)}{k(\vec x)^2 x_m^\tau x_n^\tau} \nnr
& = \tau^{-2} [\delta_{i m n} + \delta_{i l m n} + \delta_{i k m n} - \delta_{k l m n} - \delta_{i m} \delta_{k n} - \delta_{i m} \delta_{l n} - \delta_{i n} \delta_{k m} - \delta_{i n} \delta_{l m} + \delta_{k m} \delta_{l n} + \delta_{k n} \delta_{l m} \nnr
& \qquad - (2 \delta_{i m} + 2 \delta_{i n} - \delta_{k m} - \delta_{k n} - \delta_{l m} - \delta_{l n} - \delta_{i m n} + \tf{1}{2} \delta_{k m n} + \tf{1}{2} \delta_{l m n}) \log \beta_i \nnr
& \qquad + (\delta_{i m} + \delta_{i n} - \delta_{k m} - \delta_{k n} - \tf{1}{2} \delta_{i m n} + \tf{1}{2} \delta_{k m n}) \log \beta_l \nnr
& \qquad + (\delta_{i m} + \delta_{i n} - \delta_{l m} - \delta_{l n} - \tf{1}{2} \delta_{i m n} + \tf{1}{2} \delta_{l m n}) \log \beta_k \nnr
& \qquad + (\log \beta_i - \log \beta_k) (\log \beta_i - \log \beta_l) + (1 - \delta_{i k} - \delta_{i l} + \delta_{k l}) \pi^2/6] .
\label{dikilJ}
\end{align}
\end{lemma}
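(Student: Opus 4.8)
The plan is to establish the two equalities in turn: the first is a direct rewriting of the expectation using the explicit density, and the second comes from the exponential-family identities set up in Section 3.

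\textbf{First equality.} I would write $\bb E[\log(X_i/X_k)\log(X_i/X_l)]$ as the integral of $(\log x_i - \log x_k)(\log x_i - \log x_l)$ against the density $f(\vec x) = g(\vec x)/J(\gvec\alpha)$ of $IS(\vec 1 + \vec e_m + \vec e_n, \gvec\beta, \tau)$. Evaluating $g$ from \eq{gx} at $\gvec\alpha = \vec 1 + \vec e_m + \vec e_n$ gives $\alpha_+ = K + 2$ and exponent $\tau(1 + \delta_{im} + \delta_{in}) + 1$ on $x_i$, so that $\prod_i x_i^{\tau\alpha_i + 1} = \big(\prod_i x_i^{\tau + 1}\big) x_m^\tau x_n^\tau$ and hence $g(\vec x) = h(\vec x)/[k(\vec x)^2 x_m^\tau x_n^\tau]$, with $h$ and $k$ as in \eq{hk} (this holds also when $m = n$, where $x_m^\tau x_n^\tau = x_m^{2\tau}$). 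Dividing by $J(\vec 1 + \vec e_m + \vec e_n)$ from \eq{Jspecial} then produces the prefactor $K(K + 1)\beta_m\beta_n/[J_0(1 + \delta_{mn})]$, with $J_0$ as in \eq{J01}, which is exactly the first line of \eq{dikilJ}.

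\textbf{Second equality.} Here I would use the exponential-family identity \eq{lrsecond} with $j = i$, i.e.\ the decomposition $\bb E[AB] = \Cov(A, B) + \bb E[A]\,\bb E[B]$ with $A = \log(X_i/X_k)$ and $B = \log(X_i/X_l)$. The two means are obtained by substituting the digamma values \eq{digamma} into the general formula \eq{Elogik}, which reproduces \eq{dikJ} (with $k$ replaced by $l$ in the second factor). For the covariance I would take \eq{Covikjl} at $j = i$ (equivalently, apply $\fr{\pd}{\pd\alpha_i} - \fr{\pd}{\pd\alpha_l}$ to \eq{JdikJ1}) and substitute the trigamma values \eq{trigamma}, $\psi'((\vec 1 + \vec e_m + \vec e_n)_i) = \pi^2/6 - \delta_{im} - \delta_{in} + \tf{3}{4}\delta_{imn}$. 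Multiplying out and collecting then gives the right-hand side of \eq{dikilJ}.

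\textbf{Main obstacle.} Essentially all the work is the Kronecker-delta bookkeeping in that expansion. Products of deltas must be reduced using identities such as $\delta_{ij}\delta_{ik} = \delta_{ijk}$, so that the four-index symbols $\delta_{imn}$, $\delta_{ilmn} = \delta_{il}\delta_{im}\delta_{in}$, $\delta_{ikmn}$ and $\delta_{klmn}$ emerge --- from the trigamma factor crossed with the $\delta_{ik}$ and $\delta_{il}$ terms of the covariance, and from $\delta_{im}\delta_{in}$ inside the square of a digamma difference --- while the spurious three-index terms (for instance $\delta_{ikm}$, $\delta_{iln}$, $\delta_{klm}$) must cancel between the covariance part and the product of means; one also has to keep in mind that $\delta_{il}\psi'(\alpha_i) = \delta_{il}\psi'(\alpha_l)$ wherever the delta is nonzero, so the index carried by a trigamma factor can be chosen freely. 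The $\pi^2/6$ contribution arises only from the covariance and collects into $(1 - \delta_{ik} - \delta_{il} + \delta_{kl})\pi^2/6$; the terms linear in $\log\beta$ come from pairing the digamma part of one mean with the $\log\beta$ part of the other; and $(\log\beta_i - \log\beta_k)(\log\beta_i - \log\beta_l)$ is simply the product of the two $\log\beta$ parts. As a guard against slips I would check that the final collected expression is manifestly invariant under $k \leftrightarrow l$, as the left-hand side is.
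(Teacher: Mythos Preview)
Your proposal is correct and follows essentially the same route as the paper: the first equality is obtained by unwinding the density $g/J$ at $\gvec\alpha = \vec 1 + \vec e_m + \vec e_n$ via \eq{Jspecial}, and the second by the exponential-family identity \eq{lrsecond} together with the digamma/trigamma values \eq{digamma}--\eq{trigamma}. The only cosmetic difference is that the paper carries a general fourth index $j$ through the computation of $J^{-1}(\pd_{\alpha_i} - \pd_{\alpha_k})(\pd_{\alpha_j} - \pd_{\alpha_l})J$ before specializing to $j = i$, whereas you set $j = i$ at the outset and invoke the already-proved mean and covariance formulas \eq{Elogik} and \eq{Covikjl}; the Kronecker-delta bookkeeping is the same either way.
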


\begin{proof}
Again, $J(\vec 1 + \vec e_m + \vec e_n)$ in \eq{Jspecial} and definitions give the first equality.  We compute second derivatives of $J(\gvec \alpha)$, given by \eq{Jalpha1}, with respect to $\gvec \alpha$ through
\be
J^{-1} \fr{\pd^2 J}{\pd \alpha_i \, \pd \alpha_j} = \fr{\pd^2 
\log J}{\pd \alpha_i \, \pd \alpha_j} + \fr{\pd \log J}{\pd \alpha_i} \fr{\pd \log J}{\pd \alpha_j} .
\ee
Therefore,
\begin{align}
J^{-1} \fr{\pd^2 J}{\pd \alpha_i \, \pd \alpha_j} & = \tf{1}{2} \delta_{i j} [\psi'(\alpha_i) + \psi'(\alpha_j)] + [\psi (\alpha_i) - \log \beta_i] [\psi (\alpha_j) - \log \beta_j] \nnr
& \qquad - \psi'(\alpha_+) + \psi(\alpha_+) [\psi(\alpha_+) - \psi(\alpha_i) - \psi(\alpha_j) + \log \beta_i + \log \beta_j] ,
\end{align}
and so
\begin{align}
& J^{-1} \bigg( \fr{\pd}{\pd \alpha_i} - \fr{\pd}{\pd \alpha_k} \bigg) \bigg( \fr{\pd}{\pd \alpha_j} - \fr{\pd}{\pd \alpha_l} \bigg) J \nnr
& = \tf{1}{2} \psi'(\alpha_i) (\delta_{i j} - \delta_{i l}) + \tf{1}{2} \psi'(\alpha_j) (\delta_{i j} - \delta_{j k}) - \tf{1}{2} \psi'(\alpha_k) (\delta_{j k} - \delta_{k l}) - \tf{1}{2} \psi'(\alpha_l) (\delta_{i l} - \delta_{k l}) \nnr
& \qquad + [\psi (\alpha_i) - \psi(\alpha_k) - \log \beta_i + \log \beta_k] [\psi (\alpha_j) - \psi(\alpha_l) - \log \beta_j + \log \beta_l] .
\end{align}
We then evaluate at $\gvec \alpha = \vec 1 + \vec e_m + \vec e_n$, using the values of the digamma and trigamma functions in \eq{digamma} and \eq{trigamma}, i.e.\ $\psi(\alpha_i) = \delta_{i m} + \delta_{i n} - \tf{1}{2} \delta_{i m n} - \gamma $ and $\psi'(\alpha_i) = \pi^2/6 - \delta_{i m} - \delta_{i n} + \tf{3}{4} \delta_{i m n}$, so
\begin{align}
& J^{-1} \bigg( \fr{\pd}{\pd \alpha_i} - \fr{\pd}{\pd \alpha_k} \bigg) \bigg( \fr{\pd}{\pd \alpha_j} - \fr{\pd}{\pd \alpha_l} \bigg) J \nnr
& = - \delta_{i j m n} + \delta_{i l m n} + \delta_{j k m n} - \delta_{k l m n} + (\delta_{i m} - \delta_{k m}) (\delta_{j n} - \delta_{l n}) + (\delta_{i n} - \delta_{k n}) (\delta_{j m} - \delta_{l m})  \nnr
& \qquad + (\delta_{i m} + \delta_{i n} - \delta_{k m} - \delta_{k n} - \tf{1}{2} \delta_{i m n} + \tf{1}{2} \delta_{k m n}) (- \log \beta_j + \log \beta_l) \nnr
& \qquad + (\delta_{j m} + \delta_{j n} - \delta_{l m} - \delta_{l n} - \tf{1}{2} \delta_{j m n} + \tf{1}{2} \delta_{l m n}) (- \log \beta_i + \log \beta_k) \nnr
& \qquad + (\log \beta_i - \log \beta_k) (\log \beta_j - \log \beta_l) + (\delta_{i j} - \delta_{k j} - \delta_{i l} + \delta_{k l}) \pi^2/6 .
\end{align}
We now use the general expression for raw log-ratio second moments \eq{lrsecond} and set $i = j$.
\end{proof}

\begin{corollary}
The raw log-ratio second moments $\bb E [ \log ( X_i/X_k )^2 ]$ of the inverse Schl\"{o}milch distribution $IS(\vec 1 + \vec e_m + \vec e_n, \gvec \beta, \tau)$ are
\begin{align}
& \bb E \bigg[ \log \bigg( \fr{X_i}{X_k} \bigg) ^2 \bigg] = \fr{K (K + 1) \beta_m \beta_n}{J_0(1 + \delta_{m n})} \int_{S_K} \df{^{K - 1}} x \, \fr{h(\vec x) (\log x_i - \log x_k)^2}{k(\vec x)^2 x_m^\tau x_n^\tau} \nnr
& = \tau^{-2} [\delta_{i m n} + \delta_{k m n} + 2 \delta_{i k m n} - 2 \delta_{i m} \delta_{k n} - 2 \delta_{i n} \delta_{k m} - (2 \delta_{i m} + 2 \delta_{i n} - 2 \delta_{k m} - 2 \delta_{k n} - \delta_{i m n} + \delta_{k m n}) \nnr
& \qquad \times (\log \beta_i - \log \beta_k) + (\log \beta_i - \log \beta_k)^2 + (1 - \delta_{i k}) \pi^2/3 ] .
\label{dikikJ}
\end{align}
\end{corollary}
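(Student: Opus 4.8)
The plan is to obtain this corollary as the diagonal specialization $l = k$ of the preceding Lemma, rather than by a fresh integral computation. The first (integral) equality is immediate and arises exactly as in the Lemma: writing the density of $IS(\vec 1 + \vec e_m + \vec e_n, \gvec \beta, \tau)$ relative to that of the Concrete distribution $C(\gvec \beta, \tau)$ via \eq{Jspecial} and the definitions \eq{hk} produces the prefactor $K(K+1)\beta_m \beta_n/[J_0(1 + \delta_{m n})]$ and the extra factor $1/[k(\vec x)^2 x_m^\tau x_n^\tau]$, after which one simply takes the two log-ratio factors in the Lemma's integrand to be equal.

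For the closed form I would set $l = k$ in \eq{dikilJ} and simplify, using only $\delta_{kk} = 1$ and the contraction $\delta_{km}\delta_{kn} = \delta_{kmn}$ (and its analogues), together with $\delta_{klmn}\big|_{l=k} = \delta_{kmn}$ and $\delta_{ilmn}\big|_{l=k} = \delta_{ikmn}$. Term by term: the purely combinatorial block $\delta_{imn} + \delta_{ilmn} + \delta_{ikmn} - \delta_{klmn} - \delta_{im}\delta_{kn} - \delta_{im}\delta_{ln} - \delta_{in}\delta_{km} - \delta_{in}\delta_{lm} + \delta_{km}\delta_{ln} + \delta_{kn}\delta_{lm}$ collapses to $\delta_{imn} + \delta_{kmn} + 2\delta_{ikmn} - 2\delta_{im}\delta_{kn} - 2\delta_{in}\delta_{km}$; the coefficients of $\log\beta_l$ and of $\log\beta_k$ in \eq{dikilJ} become identical at $l = k$, so the three $\log\beta$-linear terms merge into $-(2\delta_{im} + 2\delta_{in} - 2\delta_{km} - 2\delta_{kn} - \delta_{imn} + \delta_{kmn})(\log\beta_i - \log\beta_k)$; the product $(\log\beta_i - \log\beta_k)(\log\beta_i - \log\beta_l)$ becomes the square $(\log\beta_i - \log\beta_k)^2$; and the last term $(1 - \delta_{ik} - \delta_{il} + \delta_{kl})\pi^2/6$ becomes $2(1 - \delta_{ik})\pi^2/6 = (1 - \delta_{ik})\pi^2/3$. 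Collecting these pieces and keeping the overall $\tau^{-2}$ yields \eq{dikikJ}.

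There is no genuine obstacle here beyond careful bookkeeping of the Kronecker-delta contractions; the only point requiring attention is that identifying the two ``off-$X_i$'' indices forces the $l$-dependent and $k$-dependent contributions to coalesce rather than to remain separate, and that mixed products such as $\delta_{km}\delta_{ln}$ must be contracted to $\delta_{kmn}$ rather than dropped. As an independent check, one can avoid the Lemma entirely and instead combine the variance $\Var[\log(X_i/X_k)]$, obtained from \eq{Covikjl} evaluated with the trigamma values \eq{trigamma}, with the square of the mean \eq{dikJ} via $\bb E[Z^2] = \Var(Z) + (\bb E Z)^2$ for $Z = \log(X_i/X_k)$; this reproduces the same expression.
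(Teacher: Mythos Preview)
Your proposal is correct and follows exactly the paper's own approach: the proof in the paper is the single line ``Set $k = l$ in \eq{dikilJ},'' and you have carried out precisely that specialization with the bookkeeping spelled out in full. Your additional sanity check via $\bb E[Z^2] = \Var(Z) + (\bb E Z)^2$ is a nice bonus, but the main argument already matches the paper.
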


\begin{proof}
Set $k = l$ in \eq{dikilJ}.
\end{proof}


\section{Fisher information and the information metric}


Consider a family of probability distributions, with probability density function $f(\vec x)$, parameterized by $K$ independent continuous parameters $\eta_1 , \ldots , \eta_K$.  The Fisher information can be expressed in terms of matrix components
\be
\mathcal{I}_{a b} = \bb E \bigg( \fr{\pd \log f}{\pd \eta_a} \fr{\pd \log f}{\pd \eta_b} \bigg) = - \bb E \bigg( \fr{\pd^2 \log f}{\pd \eta_a \, \pd \eta_b} \bigg) .
\ee
where $a, b = 1, \ldots , K$, and the second equality relies on sufficient regularity.  The Fisher information can equivalently be expressed geometrically through a Riemannian metric as the (Fisher--Rao) information metric
\be
\df s^2 = \sum_{a = 1}^K \sum_{b = 1}^K \mathcal{I}_{a b} \, \df \eta_a \, \df \eta_b .
\ee

Suppose that the $K$ independent parameters are obtained by applying one constraint to $K + 1$ parameters $\theta_1 , \ldots , \theta_{K + 1}$.  An advantage of the geometric formulation is that, under transformations of the coordinates, i.e.\ the parameters of the family of probability distributions, a Riemannian metric transforms covariantly as a tensor.  Therefore, the information metric can be expressed as
\be
\df s^2 = \sum_{A = 1}^{K + 1} \sum_{B = 1}^{K + 1} I_{A B} \, \df \theta_A \, \df \theta_B ,
\ee
where $A, B = 1, \ldots , K + 1$, and
\be
I_{A B} = \bb E \bigg( \fr{\pd \log f}{\pd \theta_A} \fr{\pd \log f}{\pd \theta_B} \bigg) = - \bb E \bigg( \fr{\pd^2 \log f}{\pd \theta_A \, \pd \theta_B} \bigg) .
\label{IAB}
\ee
One may also check the equivalence explicitly using the chain rule for differentiation.  The matrix with components $I_{A B}$ is degenerate, i.e.\ $\det I_{A B} = 0$, whereas the matrix with components $\mathcal{I}_{a b}$ is non-degenerate, i.e.\ $\det \mathcal{I}_{a b} \neq 0$.

For the inverse Schl\"{o}milch distribution, it is convenient to take $(\theta_1 , \ldots, \theta_K , \theta_{K + 1}) = (\beta_1 , \ldots , \beta_K, \tau)$.  One choice of independent coordinates is $\beta_1 , \ldots , \beta_{K - 1}$, in terms of which the fill-up value $\beta_K = 1 - \sum_{i = 1}^{K - 1} \beta_i$ is determined, so we may then take $(\eta_1 , \ldots , \eta_{K - 1} , \eta_K) = (\beta_1 , \ldots , \beta_{K - 1}, \tau)$.  A different choice of $\eta_1 , \ldots , \eta_K$ will be used to show that the information metric is hyperbolic space.

We first use the factorization of the probability density function $f(\vec x) = h(\vec x)/J_0$ to organize more easily the computation of the Fisher information.

\begin{lemma}
\label{decomposelemma}
The Fisher information of the Concrete distribution can be expressed as
\be
I_{A B} = \fr{\pd^2 \log J_0}{\pd \theta_A \, \pd \theta_B} - J_0^{-1} \int_{S_K} \! \df {^{K - 1}} x \,  h(\vec x) \, \fr{\pd^2 \log h (\vec x)}{\pd \theta_A \, \pd \theta_B} .
\label{FIdecompose}
\ee
\end{lemma}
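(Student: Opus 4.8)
The plan is to feed the factorization $f(\vec x) = h(\vec x)/J_0$ directly into the Hessian form of the Fisher information in \eq{IAB}. Writing $\log f(\vec x) = \log h(\vec x) - \log J_0$ and using that $J_0$, given by \eq{J01}, is a function of the parameters $(\theta_1, \ldots, \theta_{K+1}) = (\beta_1, \ldots, \beta_K, \tau)$ only, with no dependence on the integration variable $\vec x$, I would differentiate twice to obtain
\be
\fr{\pd^2 \log f}{\pd \theta_A \, \pd \theta_B} = \fr{\pd^2 \log h(\vec x)}{\pd \theta_A \, \pd \theta_B} - \fr{\pd^2 \log J_0}{\pd \theta_A \, \pd \theta_B} .
\ee

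Substituting this into $I_{A B} = - \bb E ( \pd^2 \log f / \pd \theta_A \, \pd \theta_B )$, the $\log J_0$ term is deterministic and passes unchanged through the expectation, contributing $\pd^2 \log J_0 / \pd \theta_A \, \pd \theta_B$; the remaining term is $- \bb E [ \pd^2 \log h / \pd \theta_A \, \pd \theta_B ] = - \int_{S_K} \! \df {^{K - 1}} x \, f(\vec x) \, \pd^2 \log h / \pd \theta_A \, \pd \theta_B$, and replacing $f(\vec x)$ by $h(\vec x)/J_0$ and pulling the constant $J_0^{-1}$ outside the integral gives precisely the integral term in \eq{FIdecompose}. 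Adding the two contributions produces the claimed identity.

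I do not expect a genuine obstacle, since this is essentially a one-line manipulation; the remainder of the plan is a few remarks on the points that deserve care. First, it is the Hessian form \eq{IAB} that makes the split clean: the score-covariance form $\bb E ( \pd \log f / \pd \theta_A \cdot \pd \log f / \pd \theta_B )$ would instead generate cross terms between $\pd \log h / \pd \theta_A$ and $\pd \log J_0 / \pd \theta_B$ and is messier, though equivalent. Second, the only step requiring justification is the regularity underlying \eq{IAB}, namely that $\int_{S_K} f(\vec x) \, \df {^{K - 1}} x = 1$ may be differentiated twice under the integral sign so that $\bb E ( \pd \log f / \pd \theta_A \cdot \pd \log f / \pd \theta_B ) = - \bb E ( \pd^2 \log f / \pd \theta_A \, \pd \theta_B )$; this is the ``sufficient regularity'' already invoked in the text, and holds because $f$ is smooth and strictly positive on the interior of $S_K$, with parameter dependence entering only through the analytic quantities $x_i^{-\tau}$, $\beta_i$ and $k(\vec x)$ of \eq{hk}. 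The degeneracy $\det I_{AB} = 0$ of this over-parameterization plays no role, since \eq{FIdecompose} is an identity between the individual matrix entries. Finally, I would note that the integrals $J_0^{-1} \int_{S_K} \df {^{K - 1}} x \, h(\vec x) \, \pd^2 \log h / \pd \theta_A \, \pd \theta_B$ delivered by this lemma are exactly the kind of integrals evaluated in Section 3 through the inverse Schl\"{o}milch distribution at $\gvec \alpha = \vec 1 + \vec e_m + \vec e_n$, so the lemma serves as the bridge between those computations and the Fisher information.
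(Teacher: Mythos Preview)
Your proposal is correct and follows essentially the same route as the paper: both start from the Hessian form $I_{AB} = -\bb E(\pd^2 \log f/\pd\theta_A\,\pd\theta_B)$, split $\log f = \log h - \log J_0$, pull the $\vec x$-independent $\log J_0$ piece through the expectation, and then write the remaining expectation as $J_0^{-1}\int h(\vec x)\,\pd^2\log h/\pd\theta_A\,\pd\theta_B$. Your additional remarks on regularity, the degeneracy of $I_{AB}$, and the link to the $\gvec\alpha = \vec 1 + \vec e_m + \vec e_n$ integrals are accurate but go beyond what the paper records.
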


\begin{proof}
From the second expression in \eq{IAB}, the Fisher information is
\be
I_{A B} = - \int_{S_K} \! \df {^{K - 1}} x \, f(\vec x) \, \fr{\pd^2 \log f(\vec x)}{\pd \theta_A \, \pd \theta_B} .
\ee
Since $f(\vec x) = J_0^{-1} h(\vec x)$, it follows that
\begin{align}
I_{A B} & = J^{-1} \int_{S_K} \! \df {^{K - 1}} x \, h(\vec x) \, \fr{\pd^2 \log J}{\pd \theta_A \, \pd \theta_B} - J_0^{-1} \int_{S_K} \! \df {^{K - 1}} x \, h(\vec x) \, \fr{\pd^2 \log h(\vec x)}{\pd \theta_A \, \pd \theta_B} \nnr
& = J_0^{-1} \fr{\pd^2 \log J_0}{\pd \theta_A \, \pd \theta_B} \int_{S_K} \! \df {^{K - 1}} x \, h(\vec x) - J_0^{-1} \int_{S_K} \! \df {^{K - 1}} x \, h(\vec x) \, \fr{\pd^2 \log h(\vec x)}{\pd \theta_A \, \pd \theta_B} \nnr
& = \fr{\pd^2 \log J_0}{\pd \theta_A \, \pd \theta_B} - J_0^{-1} \int_{S_K} \! \df {^{K - 1}} x \, h(\vec x) \, \fr{\pd^2 \log h(\vec x)}{\pd \theta_A \, \pd \theta_B} ,
\end{align}
as required.
\end{proof}

We now compute the information metric of the Concrete distribution.  Although the metric is $K$-dimensional, we initially present it in terms of $K + 1$ coordinates, one of which is redundant.

\begin{theorem}
The information metric of the Concrete distribution is
\begin{align}
\df s^2 & = \fr{(K - 1) (K \pi^2/6 + 1) + \textstyle \tf{1}{2} \sum_{i = 1}^K \sum_{j = 1}^K (\log \beta_i - \log \beta_j)^2}{(K + 1) \tau^2} \, \df \tau^2 \nnr
& \qquad + 2 \sum_{i = 1}^K \fr{\sum_{j = 1}^K \log \beta_j - K \log \beta_i}{(K + 1) \tau \beta_i} \, \df \beta_i \, \df \tau + \sum_{i = 1}^K \sum_{j = 1}^K \fr{K \delta_{i j} - 1}{(K + 1) \beta_i \beta_j} \, \df \beta_i \, \df \beta_j .
\label{metric}
\end{align}
\end{theorem}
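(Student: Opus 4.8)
The plan is to apply Lemma~\ref{decomposelemma} with $(\theta_1, \ldots, \theta_K, \theta_{K+1}) = (\beta_1, \ldots, \beta_K, \tau)$, treating $\beta_1, \ldots, \beta_K$ as unconstrained, so that each entry of the degenerate matrix $I_{AB}$ splits as $\pd^2 \log J_0 / \pd \theta_A \, \pd \theta_B$ minus the Concrete expectation of $\pd^2 \log h / \pd \theta_A \, \pd \theta_B$. From \eq{J01}, $\log J_0$ is a constant minus $(K - 1) \log \tau + \sum_i \log \beta_i$, so the first piece contributes $\delta_{ij}/\beta_i^2$ to $I_{\beta_i \beta_j}$, nothing to $I_{\beta_i \tau}$, and $(K - 1)/\tau^2$ to $I_{\tau \tau}$. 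Writing $\log h = - K \log k(\vec x) - (\tau + 1) \sum_i \log x_i$ and using $\pd k/\pd \beta_i = x_i^{-\tau}$ together with $k(\vec x) x_i^\tau = \sum_j \beta_j (x_i/x_j)^\tau$, I would then compute the explicit second derivatives $\pd^2 \log h/\pd \beta_i \, \pd \beta_j = K/(k^2 x_i^\tau x_j^\tau)$, $\pd^2 \log h/\pd \beta_i \, \pd \tau = K \sum_j \beta_j (\log x_i - \log x_j)/(k^2 x_i^\tau x_j^\tau)$, and $\pd^2 \log h/\pd \tau^2 = - K (k_2/k - k_1^2/k^2)$, where $k_1 = \pd_\tau k = - \sum_j \beta_j (\log x_j)/x_j^\tau$ and $k_2 = \pd_\tau^2 k = \sum_j \beta_j (\log x_j)^2/x_j^\tau$.

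The observation connecting these to Section~3 is that, comparing \eq{hk} and \eq{gx}, $h(\vec x)/(k(\vec x)^2 x_m^\tau x_n^\tau) = g(\vec x)$ for the Dirichlet vector $\gvec \alpha = \vec 1 + \vec e_m + \vec e_n$, so by \eq{J02}, \eq{Jalpha2} and \eq{Jspecial},
\be
\fr{1}{J_0} \int_{S_K} \! \df {^{K - 1}} x \, \fr{h(\vec x) \, \phi(\vec x)}{k(\vec x)^2 x_m^\tau x_n^\tau} = \fr{1 + \delta_{mn}}{K(K + 1) \beta_m \beta_n} \, \bb E_{IS(\vec 1 + \vec e_m + \vec e_n, \gvec \beta, \tau)} [\phi(\vec X)] .
\ee
Each of the second derivatives above is a linear combination of such integrands with $\phi$ polynomial in the log-ratios, so the Concrete expectations reduce to inverse Schl\"{o}milch log-ratio moments. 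For $I_{\beta_i \beta_j}$ one takes $\phi = 1$, $m = i$, $n = j$, giving $\bb E_C[\pd^2 \log h/\pd \beta_i \, \pd \beta_j] = (1 + \delta_{ij})/[(K + 1) \beta_i \beta_j]$. For $I_{\beta_i \tau}$ one takes $\phi = \log(X_i/X_j)$ and sums over $j$; the required mean is the $\gvec \alpha = \vec 1 + \vec e_i + \vec e_j$ case of \eq{dikJ}, whose Kronecker deltas all cancel to leave $\tf{1}{\tau} \log(\beta_i/\beta_j)$. For $I_{\tau \tau}$ one first homogenizes the $k_2/k$ term, using $\tf{h}{k x_j^\tau} = g_{\vec 1 + \vec e_j}$ and $k = \sum_l \beta_l/x_l^\tau$ to write $g_{\vec 1 + \vec e_j} = \sum_l \beta_l \, g_{\vec 1 + \vec e_j + \vec e_l}$, so that $h \, \pd_\tau^2 \log h = - K \sum_{j, l} \beta_j \beta_l [(\log x_j)^2 - \log x_j \log x_l] \, g_{\vec 1 + \vec e_j + \vec e_l}$; symmetrizing the bracket in $j \leftrightarrow l$ replaces it by $\tf{1}{2}(\log x_j - \log x_l)^2$, and the moment needed is the $i = m = j$, $k = n = l$ case of \eq{dikikJ}, in which the terms linear in $\log \beta$ vanish, leaving $\tau^{-2}[(1 - \delta_{jl})(\pi^2/3 - 2) + (\log \beta_j - \log \beta_l)^2]$.

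Assembling the pieces gives $I_{\beta_i \beta_j} = \delta_{ij}/\beta_i^2 - (1 + \delta_{ij})/[(K + 1) \beta_i \beta_j] = (K \delta_{ij} - 1)/[(K + 1) \beta_i \beta_j]$, $I_{\beta_i \tau} = (\sum_j \log \beta_j - K \log \beta_i)/[(K + 1) \beta_i \tau]$, and $\bb E_C[\pd_\tau^2 \log h] = - [K(K - 1)(\pi^2/3 - 2) + \sum_{j, l}(\log \beta_j - \log \beta_l)^2]/[2(K + 1) \tau^2]$, which combined with $(K - 1)/\tau^2$ yields the $\df \tau^2$ coefficient of \eq{metric}. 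The information metric is then $\df s^2 = I_{\tau \tau} \, \df \tau^2 + 2 \sum_i I_{\beta_i \tau} \, \df \beta_i \, \df \tau + \sum_{i, j} I_{\beta_i \beta_j} \, \df \beta_i \, \df \beta_j$, which is \eq{metric}. I expect the $\df \tau^2$ term to be the main obstacle: casting $\pd_\tau^2 \log h$ in the homogeneous, manifestly log-ratio form above (the homogenization of $k_2/k$ and the $j \leftrightarrow l$ symmetrization are the only substantive manoeuvres), and then verifying that the resulting double sum collapses to a multiple of $(K - 1)(K \pi^2/6 + 1)$ plus the quadratic in $\log \beta$; the $\beta$--$\beta$ and $\beta$--$\tau$ blocks are short bookkeeping with Kronecker deltas. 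An equivalent route uses the score form $I_{AB} = \bb E(\pd_A \log f \, \pd_B \log f)$ of \eq{IAB} with $\pd_\tau \log f = \sum_i (\log x_i)(K Y_i - 1) + (K - 1)/\tau$, where $Y_i = \beta_i/(x_i^\tau k)$ depends on $\vec x$ only through log-ratios, but it requires the same moments.
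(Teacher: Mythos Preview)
Your proof is correct and follows the same overall strategy as the paper: apply Lemma~\ref{decomposelemma}, compute the second derivatives of $\log h$, and recognize each resulting integral as an inverse Schl\"{o}milch expectation via $h(\vec x)/(k(\vec x)^2 x_m^\tau x_n^\tau) = g(\vec x)$ at $\gvec\alpha = \vec 1 + \vec e_m + \vec e_n$ together with \eq{Jspecial}. The $\df\beta_i\,\df\beta_j$ and $\df\beta_i\,\df\tau$ blocks are handled identically. The only substantive difference is in the $\df\tau^2$ block: the paper introduces an auxiliary free index $i$ and computes two separate integrals, one through \eq{dikilJ} with $k=m$, $l=n$ and one through \eq{dikikJ} with $k=n$, each carrying extra $(\log x_i)^2$ and $(\log x_i)\,k_1/k$ terms, and then subtracts so that these cancel to leave the desired $k_2/k - k_1^2/k^2$. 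Your homogenization of $k_2/k$ via $1/k = \sum_l \beta_l/(k^2 x_l^\tau)$ followed by the $j\leftrightarrow l$ symmetrization instead casts $h\,\pd_\tau^2\log h$ directly as $-\tf{K}{2}\sum_{j,l}\beta_j\beta_l(\log x_j - \log x_l)^2$ times $g$ at $\gvec\alpha = \vec 1+\vec e_j+\vec e_l$, so only the squared-log-ratio corollary \eq{dikikJ} (at $i=m=j$, $k=n=l$) is needed and the longer mixed-moment lemma \eq{dikilJ} is bypassed entirely. Both routes yield the same value for $J_0^{-1}\int_{S_K} h\,\pd_\tau^2\log h$, but yours is a bit more economical and makes the log-ratio structure of the $\tau\tau$ term manifest from the outset.
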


\begin{proof}
By differentiating $h(\vec x)$, given in \eq{hk}, we obtain the first derivatives
\begin{align}
\fr{\pd \log h (\vec x)}{\pd \beta_i} & = - \fr{K}{k(\vec x) x_i^\tau} , & \fr{\pd \log h (\vec x)}{\pd \tau} & = \fr{K \sum_{k = 1}^K (\beta_k/x_k^\tau) \log x_k}{k (\vec x)} - \sum_{j = 1}^K \log x_j .
\end{align}
We then obtain the second derivatives
\begin{align}
\label{dijlogh}
\fr{\pd^2 \log h (\vec x)}{\pd \beta_i \, \pd \beta_j} & = \fr{K}{k(\vec x)^2 x_i^\tau x_j^\tau} , \\
\label{ditlogh}
\fr{\pd^2 \log h (\vec x)}{\pd \beta_i \, \pd \tau} & = K \bigg( \fr{\log x_i}{k(\vec x) x_i^\tau} - \fr{\sum_{k = 1}^K (\beta_k/x_k^\tau) \log x_k}{k(\vec x)^2 x_i^\tau} \bigg) , \\
\label{dttlogh}
\fr{\pd^2 \log h (\vec x)}{\pd \tau^2} & = K \bigg( \fr{[\sum_{l = 1}^K (\beta_l/x_l^\tau) \log x_l]^2}{k(\vec x)^2} - \fr{\sum_{l = 1}^K (\beta_l/x_l^\tau) (\log x_l)^2}{k(\vec x)} \bigg) .
\end{align}
Each of these three types of terms will then be multiplied by $h(\vec x)$ and integrated over the simplex $S_K$.

Firstly, using \eq{Jspecial} and \eq{dijlogh}, we have
\be
\int_{S_K} \! \df {^{K - 1}} x \, h(\vec x) \fr{\pd^2 \log h (\vec x)}{\pd \beta_i \, \pd \beta_j} = \fr{J_0 (1 + \delta_{i j})}{(K + 1) \beta_i \, \beta_j} .
\label{fisherij}
\ee

Secondly, from \eq{dikJ}, setting $i = m$ and $k = n$, we have
\be
\beta_k \int_{S_K} \! \df {^{K - 1}} x \, h(\vec x) \fr{\log x_i - \log x_k}{k(\vec x)^2 x_i^\tau x_k^\tau} = - \fr{J_0 (1 + \delta_{i k})}{K (K + 1) \beta_i \tau} (\log \beta_k - \log \beta_i) .
\ee
Summing over $k$ gives
\be
\int_{S_K} \! \df {^{K - 1}} x \, h(\vec x) \bigg( \fr{\log x_i}{k(\vec x) x_i^\tau} - \fr{\sum_{k = 1}^K (\beta_k/x_k^\tau) \log x_k}{k(\vec x)^2 x_i^\tau} \bigg) = - \fr{J_0 (\sum_{k = 1}^K \log \beta_k - K \log \beta_i)}{K (K + 1) \beta_i \tau} ,
\ee
so \eq{ditlogh} gives
\be
\int_{S_K} \! \df {^{K - 1}} x \, h(\vec x) \fr{\pd^2 \log h(\vec x)}{\pd \beta_i \, \pd \tau} = - \fr{J_0 (\sum_{k = 1}^K \log \beta_k - K \log \beta_i)}{(K + 1) \beta_i \tau} .
\label{fisherit}
\ee

Thirdly, from \eq{dikilJ}, setting $k = m$ and $l = n$, we have
\begin{align}
& \beta_k \beta_l \int_{S_K} \! \df {^{K - 1}} x \, \fr{h(\vec x) (\log x_i - \log x_k) (\log x_i - \log x_l)}{k(\vec x)^2 x_k^\tau x_l^\tau} \nnr
& = \fr{J_0}{K (K + 1) \tau^2} [1 - \delta_{i l} + (- 1 + \delta_{i k} + \delta_{i l} - 2 \delta_{k l} + \delta_{i k l}) (- 2 \log \beta_i + \log \beta_k + \log \beta_l) \nnr
& \qquad + (1 + \delta_{k l}) (\log \beta_i - \log \beta_k) (\log \beta_i - \log \beta_l) + (1 - \delta_{i k} - \delta_{i l} + 3 \delta_{k l} - 2 \delta_{i k l}) \pi^2/6] .
\end{align}
Summing over $k$ and $l$ gives
\begin{align}
& \int_{S_K} \! \df {^{K - 1}} x \, h(\vec x) \bigg[ (\log x_i)^2 - \fr{2 \log x_i}{k(\vec x)} \sum_{k = 1}^K \fr{\beta_k \log x_k}{x_k^\tau} + \fr{1}{k(\vec x)^2} \bigg( \sum_{k = 1}^K \fr{\beta_k \log x_k}{x_k^\tau} \bigg) ^2 \bigg] \nnr
& = \fr{J_0}{K (K + 1) \tau^2} [K (K - 1) + 2 K (K + 1) \log \beta_i - 2 (K + 1) \textstyle \sum_{k = 1}^K \log \beta_k \nnr
& \qquad + (- K \log \beta_i + \textstyle \sum_{k = 1}^K \log \beta_k)^2 + \sum_{k = 1}^K (- \log \beta_i + \log \beta_k)^2 + (K + 2) (K - 1) \pi^2/6] .
\label{klsum}
\end{align}
From \eq{dikikJ}, setting $k = n$, we have
\begin{align}
& \beta_m \beta_k \int_{S_K} \! \df {^{K - 1}} x \, \fr{h(\vec x) (\log x_i - \log x_k)^2}{k(\vec x)^2 x_m^\tau x_k^\tau} \nnr
& = \fr{J_0}{K (K + 1) \tau^2} [- 2 \delta_{i m} + 2 \delta_{k m} + 2 (- 1 + \delta_{i k} + \delta_{i m} - 2 \delta_{k m} + \delta_{i k m}) (- \log \beta_i + \log \beta_k) \nnr
& \qquad + (1 + \delta_{m k}) (\log \beta_i - \log \beta_k)^2 + (1 - \delta_{i k} + \delta_{k m} - \delta_{i k m}) \pi^2/3] .
\end{align}
Summing over $m$ and $k$ gives
\begin{align}
& \int_{S_K} \! \df {^{K - 1}} x \, h(\vec x) \bigg( (\log x_i)^2 - \fr{2 \log x_i}{k(\vec x)} \sum_{k = 1}^K \fr{\beta_k \log x_k}{x_k^\tau} + \fr{1}{k(\vec x)} \sum_{k = 1}^K \fr{\beta_k (\log x_k)^2}{x_k^\tau} \bigg) \nnr
& = \fr{J_0}{K (K + 1) \tau^2} [2 K (K + 1) \log \beta_i - 2 (K + 1) \textstyle \sum_{k = 1}^K \log \beta_k \nnr
& \qquad + (K + 1) \textstyle \sum_{k = 1}^K (- \log \beta_i + \log \beta_k)^2 + (K + 1) (K - 1) \pi^2/3] .
\label{mksum}
\end{align}
Subtracting \eq{klsum} from \eq{mksum}, we obtain
\begin{align}
& \int_{S_K} \! \df {^{K - 1}} x \, h(\vec x) \bigg( \fr{\sum_{k = 1}^K (\beta_k/x_k^\tau) (\log x_k)^2}{k(\vec x)} - \fr{[\sum_{k = 1}^K (\beta_k/x_k^\tau) \log x_k]^2}{k(\vec x)^2} \bigg) \nnr
& = \fr{J_0}{K (K + 1) \tau^2} [ K (K - 1) (\pi^2/6 - 1) + \textstyle \tf{1}{2}   \sum_{i = 1}^K \sum_{k = 1}^K (\log \beta_i - \log \beta_k)^2 ] ,
\end{align}
and so \eq{dttlogh} gives
\be
\int_{S_K} \! \df {^{K - 1}} x \, h(\vec x) \fr{\pd^2 \log h (\vec x)}{\pd \tau^2} = - \fr{J_0 [ K (K - 1) (\pi^2/6 - 1) + \textstyle \tf{1}{2}   \sum_{i = 1}^K \sum_{k = 1}^K (\log \beta_i - \log \beta_k)^2 ]}{(K + 1) \tau^2} .
\label{fishertt}
\ee

Finally, we compute
\begin{align}
\sum_{A = 1}^{K + 1} \sum_{B = 1}^{K + 1} \fr{\pd^2 \log J_0}{\pd \theta_A \, \pd \theta_B} \, \df \theta_A \, \df \theta_B = \fr{K - 1}{\tau^2} \, \df \tau^2 + \sum_{i = 1}^K \fr{\delta_{i j}}{\beta_i \, \beta_j} \, \df \beta_i \, \df \beta_j .
\label{ddlogJ0}
\end{align}
Substituting \eq{fisherij}, \eq{fisherit}, \eq{fishertt} and \eq{ddlogJ0} into \eq{FIdecompose} gives the Fisher information \eq{metric}.
\end{proof}

For practical use, it may be more helpful to use a $K \times K$ Fisher information matrix $\mathcal{I}_{a b}$, which we now provide.

\begin{corollary}
Regarding $\beta_1 , \ldots , \beta_{K - 1}$ as independent and $\beta_K$ expressed in terms of these independent parameters through $\beta_K = 1 - \sum_{i = 1}^{K - 1} \beta_i$, the components of the Fisher information are
\begin{align}
\mathcal{I}_{\tau \tau} & = \fr{(K - 1) (K \pi^2/6 + 1) + \textstyle \tf{1}{2} \sum_{i = 1}^K \sum_{j = 1}^K (\log \beta_i - \log \beta_j)^2}{(K + 1) \tau^2} , \nnr
\mathcal{I}_{i \tau} & = \fr{1}{(K + 1) \tau} \bigg( \fr{\sum_{j = 1}^K \log \beta_j - K \log \beta_i}{\beta_i} - \fr{\sum_{j = 1}^K \log \beta_j - K \log \beta_K}{\beta_K} \bigg) , \nnr
\mathcal{I}_{i j} & = \fr{1}{K + 1} \bigg( \fr{K \delta_{i j} - 1}{\beta_i \beta_j} + \fr{1}{\beta_i \beta_K} + \fr{1}{\beta_j \beta_K} + \fr{K - 1}{\beta_K^2} \bigg) ,
\end{align}
where $i, j = 1, \ldots , K - 1$.
\end{corollary}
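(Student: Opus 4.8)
The plan is to obtain the nondegenerate $K \times K$ Fisher information matrix $\mathcal{I}_{ab}$ from the degenerate $(K+1)$-dimensional metric \eq{metric} simply by imposing the constraint $\sum_{i=1}^K \beta_i = 1$, using the fact, already emphasized in this section, that the information metric transforms as a tensor under changes of the parameterization. With $(\theta_1,\ldots,\theta_{K+1}) = (\beta_1,\ldots,\beta_K,\tau)$ and the independent coordinates $(\eta_1,\ldots,\eta_K) = (\beta_1,\ldots,\beta_{K-1},\tau)$, the constraint gives $\beta_K = 1 - \sum_{i=1}^{K-1}\beta_i$, hence $\df\beta_K = -\sum_{i=1}^{K-1}\df\beta_i$ while $\df\tau$ and $\df\beta_1,\ldots,\df\beta_{K-1}$ remain free. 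Substituting this single relation into \eq{metric} and collecting the coefficients of $\df\tau^2$, of $\df\beta_i\,\df\tau$, and of $\df\beta_i\,\df\beta_j$ produces $\mathcal{I}_{\tau\tau}$, $\mathcal{I}_{i\tau}$ and $\mathcal{I}_{ij}$.

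Concretely, I would first record that this substitution is equivalent to the chain rule $\mathcal{I}_{ab} = \sum_{A,B}(\partial\theta_A/\partial\eta_a)(\partial\theta_B/\partial\eta_b)\,I_{AB}$, where the only nonstandard Jacobian entry is $\partial\beta_K/\partial\eta_i = -1$ for $i = 1,\ldots,K-1$, all others being the obvious Kronecker deltas. Reading off $I_{\tau\tau}$, $I_{i\tau}$ and $I_{ij}$ from \eq{metric}, this yields $\mathcal{I}_{\tau\tau} = I_{\tau\tau}$ (no $\df\beta_K$ multiplies $\df\tau^2$, so this component is unaffected), $\mathcal{I}_{i\tau} = I_{i\tau} - I_{K\tau}$, and $\mathcal{I}_{ij} = I_{ij} - I_{iK} - I_{jK} + I_{KK}$ for $i,j = 1,\ldots,K-1$. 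One then just inserts the explicit entries: $I_{i\tau} = (\sum_{j}\log\beta_j - K\log\beta_i)/[(K+1)\tau\beta_i]$ immediately gives the stated difference of two such terms in $\beta_i$ and $\beta_K$, and $I_{ij} = (K\delta_{ij}-1)/[(K+1)\beta_i\beta_j]$ together with $I_{iK} = -1/[(K+1)\beta_i\beta_K]$ and $I_{KK} = (K-1)/[(K+1)\beta_K^2]$ (both using $\delta_{iK}=0$ for $i \le K-1$) assembles the four-term expression in the statement.

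There is no genuine obstacle here: the computation is mechanical once the tensorial transformation is invoked, and the only points needing a little care are the factor-of-$2$ bookkeeping between the bilinear-form and matrix-component conventions for the off-diagonal pieces, and keeping the sign $\partial\beta_K/\partial\eta_i = -1$ consistent when symmetrizing the $d\beta_a\,d\beta_K$ terms. The degeneracy $\det I_{AB} = 0$ noted before \eq{metric} — whose kernel is spanned by the infinitesimal scaling $\df\beta_i \propto \beta_i$ reflecting the invariance $\gvec\beta \to \lambda\gvec\beta$, a direction transverse to the constraint surface — guarantees that this restriction is well defined and returns precisely the nondegenerate $\mathcal{I}_{ab}$ in the chosen chart.
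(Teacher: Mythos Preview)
Your proposal is correct and follows the same approach as the paper: substitute $\beta_K = 1 - \sum_{i=1}^{K-1}\beta_i$, hence $\df\beta_K = -\sum_{i=1}^{K-1}\df\beta_i$, into the metric \eq{metric} and read off the coefficients. The paper's proof is in fact just that one sentence; you have merely spelled out the chain-rule bookkeeping $\mathcal{I}_{i\tau} = I_{i\tau} - I_{K\tau}$ and $\mathcal{I}_{ij} = I_{ij} - I_{iK} - I_{jK} + I_{KK}$ that the paper leaves implicit.
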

	
\begin{proof}
Set $\beta_K = 1 - \sum_{i = 1}^{K - 1} \beta_i$ in the information metric \eq{metric}, so $\df \beta_K = - \sum_{i = 1}^{K - 1} \df \beta_i$.
\end{proof}

Location-scale distributions, parameterized by a location in $\bb R^K$ and a single scale, generally have hyperbolic information metrics $\bb H^{K + 1}$ \cite{sabobe}.  One explicit example is the $K$-variate normal distributions with a covariance matrix proportional to the identity matrix \cite{bensadon}.  We similarly find that the Concrete distribution has a hyperbolic information metric.  Note that hyperbolic space can also arise from distributions that are not location-scale distributions, such as the negative multinomial distribution \cite{ollcua}.

\begin{theorem}
The information metric of the Concrete distribution is hyperbolic space $\bb H^K$ with sectional curvature $-1/\ell^2$, where $\ell = \sr{(K - 1) (K \pi^2/6 + 1)/(K + 1)}$.
\end{theorem}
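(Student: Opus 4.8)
The plan is to produce an explicit change of parameters bringing the metric \eq{metric} into Poincar\'{e} upper half-space form, after which constant negative sectional curvature is read off immediately.

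First I would pass to logarithmic parameters $u_i = \log\beta_i$, so that $\df\beta_i/\beta_i = \df u_i$ and the spatial block of \eq{metric} becomes the \emph{constant} quadratic form $\tf{1}{K+1}\sum_{i,j}(K\delta_{ij}-1)\,\df u_i\,\df u_j$. The matrix with entries $K\delta_{ij}-1$ is $K$ times the orthogonal projection onto the hyperplane $\vec 1^\perp$; its kernel, the direction $\df u\propto\vec 1$ with $\df\tau = 0$, is precisely the null direction of the degenerate $(K+1)$-coordinate metric and encodes the scaling invariance $\gvec\beta\to\lambda\gvec\beta$. I would therefore introduce the centred log-parameters $w_i = \log\beta_i - \tf1K\sum_{j=1}^K\log\beta_j$, which satisfy $\sum_i w_i = 0$ (so they carry $K-1$ independent degrees of freedom), and rewrite the metric in terms of $(\tau, w_1,\dots,w_K)$ --- a genuine $K$-dimensional coordinate system on which the metric is nondegenerate. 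Using $\tf12\sum_{i,j}(\log\beta_i-\log\beta_j)^2 = \sum_{i,j}(K\delta_{ij}-1)\log\beta_i\log\beta_j = K\sum_i w_i^2$ together with its first differential, the three blocks of \eq{metric} should combine into a single perfect square:
\be
(K+1)\,\tau^2\,\df s^2 = (K+1)\,\ell^2\,\df\tau^2 + K\sum_{i=1}^K\big(w_i\,\df\tau - \tau\,\df w_i\big)^2 ,
\ee
where $(K+1)\ell^2 = (K-1)(K\pi^2/6+1)$. Establishing this identity --- in particular checking that the cross term of \eq{metric} equals $-\tf{2K}{(K+1)\tau}\sum_i w_i\,\df w_i\,\df\tau$ after centring and that it assembles with the $w$-dependence of the $\df\tau^2$ coefficient --- is the one step requiring care.

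Next I would set $p_i = w_i/\tau$, so that $w_i\,\df\tau - \tau\,\df w_i = -\tau^2\,\df p_i$ and the metric reduces to $\df s^2 = \ell^2\,\df\tau^2/\tau^2 + \tf{K}{K+1}\,\tau^2\sum_i\df p_i^2$. Finally, substituting $z = 1/\tau$ (whence $\df\tau^2/\tau^2 = \df z^2/z^2$ and $\tau^2 = 1/z^2$) and rescaling $q_i = \sr{K/((K+1)\ell^2)}\,p_i$ gives
\be
\df s^2 = \fr{\ell^2}{z^2}\bigg(\df z^2 + \sum_{i=1}^K\df q_i^2\bigg) , \qquad z\in(0,\infty) , \quad \textstyle\sum_{i=1}^K q_i = 0 .
\ee
Choosing an orthonormal basis of $\vec 1^\perp$ --- for instance an isometric-log-ratio basis --- turns $\sum_i\df q_i^2$ into the standard Euclidean form in $K-1$ variables, so that $(z,q)$ is an orthogonal parameterization in which $\tau$ decouples from the log-contrasts. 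This is the upper half-space model of $\bb H^K$ normalized to constant sectional curvature $-1/\ell^2$; since $\tau$ ranges over all of $(0,\infty)$ and the centred log-parameters over all of $\vec 1^\perp$, the correspondence is a global diffeomorphism onto the half-space, so the information manifold is $\bb H^K$ itself, with $\ell = \sr{(K-1)(K\pi^2/6+1)/(K+1)}$.

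The main obstacle is thus purely the bookkeeping in the first step: expanding $\tf12\sum_{i,j}(\log\beta_i-\log\beta_j)^2$ as a quadratic form in the centred parameters, and verifying that the off-diagonal and diagonal pieces of \eq{metric} reorganize into $K\sum_i(w_i\,\df\tau-\tau\,\df w_i)^2$ with the stated normalization. Once this is in hand, the passages to $p_i$, to $z = 1/\tau$, and to the rescaled $q_i$ are immediate, and the recognition of the half-space metric together with its curvature is standard (see e.g.\ \cite{atkmit}).
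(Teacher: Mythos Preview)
Your proposal is correct and follows essentially the same route as the paper: rewrite \eq{metric} as $\ell^2\,\df\tau^2/\tau^2$ plus a sum of squares in the differentials of $(\log\beta)/\tau$, then pass to $z=1/\tau$ and diagonalize the transverse block to reach Poincar\'{e} half-space form. The only cosmetic difference is that you work with centred log-ratio coordinates $w_i=\log\beta_i-\tf1K\sum_j\log\beta_j$ (clr) and defer to an ilr basis at the end, whereas the paper uses the pairwise form $\tf{1}{2(K+1)}\sum_{i,j}\bigl(-\df\beta_i/\beta_i+\df\beta_j/\beta_j+(\log\beta_i-\log\beta_j)\,\df\tau/\tau\bigr)^2$ and then the additive log-ratio coordinates $\xi_a=\log(\beta_a/\beta_K)/(\ell\tau)$ before an explicit linear change; your perfect-square identity and the paper's are algebraically equivalent.
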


\begin{proof}
The information metric \eq{metric} can be written as
\be
\df s^2 = \ell^2 \fr{\df \tau^2}{\tau^2} + \fr{1}{2 (K + 1)} \sum_{i = 1}^K \sum_{j = 1}^K \bigg( - \fr{\df \beta_i}{\beta_i} + \fr{\df \beta_j}{\beta_j} + (\log \beta_i - \log \beta_j) \fr{\df \tau}{\tau} \bigg) ^2 ,
\ee
where $\ell = \sr{(K - 1) (K \pi^2/6 + 1)/(K + 1)}$.  Make the coordinate change
\begin{align}
\xi_a & = \fr{\log(\beta_a/\beta_K)}{\ell \tau} , & \eta_K & = \fr{1}{\tau} ,
\label{xicoordinates}
\end{align}
for $a = 1 , \ldots , K - 1$, so the coordinates $\xi_a$ take values on $\bb R$ and $\eta_K > 0$.  Then
\be
\ell \, \df \xi_a = \fr{1}{\tau} \bigg( \fr{\df \beta_a}{\beta_a} - \fr{\df \beta_K}{\beta_K} - \fr{\log \beta_a - \log \beta_K}{\tau} \, \df \tau \bigg) ,
\ee
so the information metric takes the form
\be
\df s^2 = \ell^2 \bigg( \fr{\df \eta_K^2}{\eta_K^2} + \fr{\textstyle \sum_{a = 1}^{K - 1} \df \xi_a^2 + \tf{1}{2} \sum_{a = 1}^{K - 1} \sum_{b = 1}^{K - 1} (\df \xi_a - \df \xi_b)^2}{(K + 1) \eta_K^2} \bigg) .
\ee
By a linear coordinate transformation on the $\xi_a$ coordinates to cartesian coordinates $\eta_a$ for Euclidean space $\bb R^{K - 1}$, the information metric can be expressed as
\be
\df s^2 = \ell^2 \bigg( \fr{\df \eta_K^2}{\eta_K^2} + \fr{\textstyle \sum_{a = 1}^{K - 1} \df \eta_a^2}{\eta_K^2} \bigg) ,
\ee
which is hyperbolic space $\bb H^K$ expressed in Poincar\'{e} half-space coordinates.  An example of such a coordinate transformation is
\be
\xi_a = \fr{\sr{K + 1} \eta_a}{\sr{K}} + \fr{\sr{K + 1} \textstyle \sum_{b = 1}^{K - 1} \eta_b }{\sr{K} (\sr{K} + 1)} ,
\ee
or conversely
\be
\eta_a = \fr{\sr{K} \xi_a}{\sr{K + 1}} - \fr{\sum_{b = 1}^{K - 1} \xi_b}{\sr{K + 1} (\sr{K} + 1)} ,
\label{etacoordinates}
\ee
for $a = 1 , \ldots , K - 1$.
\end{proof}

The Poincar\'{e} half-space coordinates correspond to an orthogonal parameterization of the Concrete distribution.  Hyperbolic space $\bb H^K$ has further possible orthogonal coordinates, corresponding to other orthogonal parameterizations.

Geodesics on hyperbolic space $\bb H^K$ are well-known, and there is a unique geodesic that connects any two points (see e.g.\ \cite{caflkepa}).  The geodesics are lines of constant $\eta_1 , \ldots , \eta_{K - 1}$ and also semicircles with centres on $\eta_K = 0$.  The length of the connecting geodesic provides a geodesic distance, Fisher--Rao distance, between two Concrete distributions.

\begin{proposition}
The geodesic distance between concrete distributions $C(\gvec \beta, \tau)$ and $C(\gvec \beta ', \tau')$ is
\be
d(C(\gvec \beta, \tau), C(\gvec \beta ', \tau')) = 2 \ell \sinh^{-1} \Bigg ( \fr{1}{2} \sr{ \bigg( \sr{\fr{\tau'}{\tau}} - \sr{\fr{\tau}{\tau'}} \bigg) ^2 + \fr{\sum_{i =1}^K \sum_{j = 1}^K (\Delta_i - \Delta_j)^2}{2 (K + 1) \ell^2} } \Bigg ) ,
\ee
where
\be
\Delta_i = \sr{\fr{\tau'}{\tau}} \log \beta_i - \sr{\fr{\tau}{\tau'}} \log \beta_i'
\ee
and $\ell = \sr{(K - 1) (K \pi^2/6 + 1)/(K + 1)}$.
\end{proposition}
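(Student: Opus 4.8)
The plan is to obtain the Fisher--Rao distance directly from the standard geodesic-distance formula for the Poincar\'{e} half-space model and then to translate the half-space coordinates $(\eta_1, \ldots, \eta_{K - 1}, \eta_K)$ of \eq{xicoordinates} and \eq{etacoordinates} back into the original parameters $\gvec \beta$ and $\tau$.  By the preceding theorem the information metric equals $\ell^2$ times the unit-curvature half-space metric $(\sum_{a = 1}^{K - 1} \df \eta_a^2 + \df \eta_K^2)/\eta_K^2$, so every Fisher--Rao distance is $\ell$ times the corresponding hyperbolic distance; and in the unit-curvature half-space the distance between points $(\eta_1, \ldots, \eta_{K - 1}, \eta_K)$ and $(\eta_1', \ldots, \eta_{K - 1}', \eta_K')$ satisfies $\cosh d = 1 + [\sum_{a = 1}^{K - 1} (\eta_a - \eta_a')^2 + (\eta_K - \eta_K')^2]/(2 \eta_K \eta_K')$.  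Using $\cosh(2 \sinh^{-1} t) = 1 + 2 t^2$, this is equivalent to $d = 2 \sinh^{-1} \big( \tf{1}{2} \sr{[\sum_{a = 1}^{K - 1} (\eta_a - \eta_a')^2 + (\eta_K - \eta_K')^2]/(\eta_K \eta_K')} \big)$.  (If one prefers not to quote this, it follows by integrating $\df s$ along the connecting geodesic, whose semicircular form was already noted above.)

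First I would substitute the last coordinate: from \eq{xicoordinates}, $\eta_K = 1/\tau$ and $\eta_K' = 1/\tau'$, so $\eta_K \eta_K' = 1/(\tau \tau')$ and $(\eta_K - \eta_K')^2/(\eta_K \eta_K') = \tau \tau' (1/\tau - 1/\tau')^2 = \tau'/\tau - 2 + \tau/\tau' = (\sr{\tau'/\tau} - \sr{\tau/\tau'})^2$, the first term under the square root.  For the other coordinates it is cleanest to bypass the explicit linear map \eq{etacoordinates} and use only the identity that defines it: before passing to cartesian $\eta_a$ the metric reads $\ell^2 \df \eta_K^2/\eta_K^2 + \ell^2 [\sum_a \df \xi_a^2 + \tf{1}{2} \sum_{a, b} (\df \xi_a - \df \xi_b)^2]/((K + 1) \eta_K^2)$, and the transformation is chosen so that $(K + 1) \sum_{a = 1}^{K - 1} \eta_a^2 = \sum_{a = 1}^{K - 1} \xi_a^2 + \tf{1}{2} \sum_{a, b = 1}^{K - 1} (\xi_a - \xi_b)^2 = K \sum_{a = 1}^{K - 1} \xi_a^2 - (\sum_{a = 1}^{K - 1} \xi_a)^2$, hence by linearity $(K + 1) \sum_a (\eta_a - \eta_a')^2 = K \sum_a u_a^2 - (\sum_a u_a)^2$ with $u_a = \xi_a - \xi_a'$.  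Setting $z_i = (\log \beta_i)/\tau - (\log \beta_i')/\tau'$ for $i = 1, \ldots, K$, one has $u_a = (z_a - z_K)/\ell$, and expanding collapses $K \sum_{a = 1}^{K - 1} u_a^2 - (\sum_{a = 1}^{K - 1} u_a)^2$ to $\ell^{-2} [K \sum_{i = 1}^K z_i^2 - (\sum_{i = 1}^K z_i)^2] = \ell^{-2} \cdot \tf{1}{2} \sum_{i = 1}^K \sum_{j = 1}^K (z_i - z_j)^2$.  Since $z_i - z_j = (\Delta_i - \Delta_j)/\sr{\tau \tau'}$ with $\Delta_i$ as in the statement, this gives $\sum_a (\eta_a - \eta_a')^2 = [\sum_{i, j} (\Delta_i - \Delta_j)^2]/(2 (K + 1) \ell^2 \tau \tau')$, so $\sum_a (\eta_a - \eta_a')^2/(\eta_K \eta_K') = [\sum_{i, j} (\Delta_i - \Delta_j)^2]/(2 (K + 1) \ell^2)$, the second term.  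Inserting both terms and multiplying the $\sinh^{-1}$ expression by $\ell$ yields the claimed formula.

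The one step that needs genuine care --- and the natural place for the computation to go wrong --- is the collapse in the second paragraph: the index $K$ enters asymmetrically, both through the simplex fill-up $\beta_K = 1 - \sum_{i < K} \beta_i$ and through the choice $\xi_a = \log(\beta_a/\beta_K)/(\ell \tau)$ in \eq{xicoordinates}, so one must verify that these asymmetries cancel and leave the fully symmetric double sum $\sum_{i = 1}^K \sum_{j = 1}^K (\Delta_i - \Delta_j)^2$.  This is precisely where the identity $K \sum_{i = 1}^K z_i^2 - (\sum_{i = 1}^K z_i)^2 = \tf{1}{2} \sum_{i, j} (z_i - z_j)^2$ does the work, and it doubles as a consistency check on the coordinate change of the previous theorem.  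Everything else --- the $\tau$-substitution, the hyperbolic identity, and the overall rescaling by $\ell$ --- is routine.
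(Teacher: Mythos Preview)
Your proof is correct and follows essentially the same route as the paper: quote the Poincar\'{e} half-space distance formula, substitute $\eta_K = 1/\tau$, reduce $\sum_a (\eta_a - \eta_a')^2$ to the $\xi$-quadratic form $K\sum u_a^2 - (\sum u_a)^2$, and collapse to the symmetric double sum via $K\sum z_i^2 - (\sum z_i)^2 = \tf{1}{2}\sum_{i,j}(z_i - z_j)^2$. The only cosmetic difference is that the paper invokes the explicit linear map \eq{etacoordinates} to pass from $\eta$ to $\xi$, whereas you use directly that this map is a linear isometry between the two quadratic forms --- a slightly cleaner shortcut, but the same computation.
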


\begin{proof}
For the half-space model of unit hyperbolic space $\bb H^K$, given in Poincar\'{e} half-space coordinates as $\df s^2 = \sum_{i = 1}^K \df \eta_i^2/\eta_K^2$, the geodesic distance between $\gvec \eta$ and $\gvec \eta'$ is (see e.g.\ p.\ 80 of \cite{iversen} or p.\ 126 of \cite{ratcliffe})
\be
d_{\mathbb H^K} (\gvec \eta, \gvec \eta') = 2 \sinh^{-1} \bigg( \fr{| \gvec \eta - \gvec \eta' |}{2 \sr{\eta_K \eta_K'}} \bigg) .
\label{HKdistance}
\ee
From the coordinate transformation of $\eta_K$ in \eq{xicoordinates}, we have
\begin{align}
\eta_K & = 1/\tau , & \eta_K' & = 1/\tau' ,
\label{etaK}
\end{align}
and so
\be
(\eta_K - \eta_K')^2 = \bigg( \fr{1}{\tau} - \fr{1}{\tau'} \bigg) ^2 .
\ee
From the coordinate transformations \eq{xicoordinates} and \eq{etacoordinates}, we have
\begin{align}
\sum_{a = 1}^{K - 1} (\eta_a - \eta_a')^2 & = \fr{1}{K + 1} \bigg[ K \sum_{a = 1}^{K - 1}  (\xi_a - \xi_a')^2 - \bigg( \sum_{b = 1}^{K - 1} (\xi_b - \xi_b') \bigg)^2 \bigg] \nnr
& = \fr{1}{(K + 1) \ell^2 \sr{\tau \tau'}} \bigg[ K \sum_{a = 1}^{K - 1}  (\Delta_a - \Delta_K)^2 - \bigg( \sum_{b = 1}^{K - 1} \Delta_b - (K - 1) \Delta_K \bigg)^2 \bigg] \nnr
& = \fr{1}{2 (K + 1) \ell^2 \sr{\tau \tau'}} \sum_{i =1}^K \sum_{j = 1}^K (\Delta_i - \Delta_j)^2 .
\end{align}
Summing these two contributions, we have
\be
| \gvec \eta - \gvec \eta' |^2 = \bigg( \fr{1}{\tau} - \fr{1}{\tau'} \bigg) ^2 + \fr{\sum_{i =1}^K \sum_{j = 1}^K (\Delta_i - \Delta_j)^2}{2 (K + 1) \ell^2 \sr{\tau \tau'}} .
\label{etanorm}
\ee
We substitute \eq{etaK} and \eq{etanorm} into the distance formula for unit hyperbolic space \eq{HKdistance}, and then scale by a factor of $\ell$ to obtain
\be
d(C(\gvec \beta, \tau), C(\gvec \beta ', \tau')) = \ell d_{\mathbb H^K} (\gvec \eta, \gvec \eta') ,
\ee
which is the geodesic distance between the distributions.
\end{proof}

In the case that $\gvec \beta = \gvec \beta'$, the geodesic distance simplifies to
\be
d(C(\gvec \beta, \tau), C(\gvec \beta, \tau')) = 2 \ell \sinh^{-1} \Bigg ( \fr{| \tau - \tau' |}{2 \sr{\tau \tau'}} \sr{ 1 + \fr{\sum_{i =1}^K \sum_{j = 1}^K (\log \beta_i - \log \beta_j)^2}{2 (K + 1) \ell^2} } \Bigg ) .
\ee
In the special case of equal temperatures, $\tau = \tau'$, the geodesic distance simplifies to
\be
d(C(\gvec \beta, \tau), C(\gvec \beta ', \tau)) = 2 \ell \sinh^{-1} \Bigg ( \fr{1}{2 \ell} \sr{ \fr{\sum_{i = 1}^K \sum_{j = 1}^K [\log (\beta_i/\beta_i') - \log (\beta_j/\beta_j')]^2}{2 (K + 1)} } \Bigg ) ,
\label{equaltdistance}
\ee
which is independent of $\tau$.  However, it is not possible to use this equal temperature formula to define a distance in the $\tau \rightarrow 0$ or $\tau \rightarrow \infty$ limits.

The geodesic distance for Concrete distributions generically diverges if one of the temperatures becomes zero or infinite, i.e.\ in the limit that one of the distributions becomes a discrete categorical distribution, so does not provide a well-defined distance in this case.  The geodesic distance between two categorical distributions with probability vectors $\gvec \beta$ and $\gvec \beta'$ that are properly normalized, i.e.\ $\sum_{i = 1}^K \beta_i = \sum_{i = 1}^K \beta_i' = 1$, is (see e.g.\ \cite{kass} and references within)
\be
d(\gvec \beta, \gvec \beta') = 2 \cos^{-1} \bigg( \sum_{i = 1}^K \sr{\beta_i \beta'_i} \bigg) .
\label{catdistance}
\ee
This distance is derived from the metric on the positive orthant of a sphere $S^{K - 1}$ of radius $\ell = 2$, which in canonical simplex coordinates $\gvec \beta$ is $\df s^2 = \sum_{i = 1}^K \df \beta_i^2/\beta_i$, also known in mathematical biology as the Shahshahani metric \cite{shahshahani}.  There is no inconsistency between the finiteness of the geodesic distance for categorical distributions and the divergence in the zero or infinite temperature limits for Concrete distributions, or between the distances \eq{equaltdistance} and \eq{catdistance}, since such limits are not strictly Concrete distributions and do not correspond to any point on the hyperbolic space $\bb H^K$ information manifold.

\end{document}